\documentclass[ journal,comsoc]{IEEEtran}
%

\usepackage[T1]{fontenc}


%

%

\ifCLASSINFOpdf
\else
\usepackage[dvips]{graphicx}
\fi
\usepackage{url}

\hyphenation{op-tical net-works semi-conduc-tor}

\usepackage{amsmath}
\usepackage{amsthm}
%
\interdisplaylinepenalty=2500


\usepackage[cmintegrals]{newtxmath}

%

\usepackage{bm}
\usepackage{bbm}
\usepackage{subfigure}
\usepackage{graphicx}
\usepackage{calc}
\usepackage{epstopdf}
\newtheorem{theorem}{Theorem}
\newtheorem{lemma}{Lemma}
\newtheorem{remark}{Remark}

\newtheorem{corollary}{Corollary}
\newcommand{\norm}[1]{\left\lVert#1\right\rVert} 
\newtheorem{definition}{Definition}
\newtheorem{assumption}{Assumption}
\usepackage{verbatim}
\usepackage[vlined,linesnumbered,ruled,resetcount]{algorithm2e}
\usepackage{color}
\usepackage{array}
\usepackage{cite}
\usepackage{soul} 
\usepackage{xcolor}
\usepackage{floatrow}
\usepackage{lipsum}
\usepackage[bottom]{footmisc}
\usepackage{multicol}
\usepackage{multirow} 
\usepackage{algorithmic}

\begin{document}
\title{Privacy-preserving Incremental ADMM for Decentralized Consensus Optimization}

 \author{
	Yu~Ye, \IEEEmembership{Student~Member,~IEEE,}
	Hao~Chen, \IEEEmembership{Student~Member,~IEEE,}
    Ming~Xiao, \IEEEmembership{Senior~Member,~IEEE,}
	Mikael~Skoglund, \IEEEmembership{Fellow,~IEEE,}
	and~H.~Vincent~Poor \IEEEmembership{Fellow,~IEEE }

%
	
	\thanks{This work was supported in part by the U.S. National Science Foundation under Grant CCF-1908308. 
	
	Yu Ye, Hao Chen, Ming Xiao and Mikael Skoglund are with the School of Electrical Engineering and Computer Science, Royal Institute of Technology (KTH), Stockholm, Sweden (email: yu9@kth.se, haoch@kth.se, mingx@kth.se, skoglund@kth.se).
	
	H. Vincent Poor is with Department of Electrical Engineering, Princeton University, Princeton, USA (email: poor@princeton.edu).	
	}
		
}
  
\maketitle
 
\begin{abstract}
 
The alternating direction method of multipliers (ADMM) has been recently recognized as a promising optimizer for large-scale machine learning models. However, there are very few results studying ADMM from the aspect of communication costs, especially jointly with privacy preservation, which are critical for distributed learning. We investigate the communication efficiency and privacy-preservation of ADMM by solving the consensus optimization problem over decentralized networks. Since walk algorithms can reduce communication load, we first propose incremental ADMM (I-ADMM) based on the walk algorithm, the updating order of which follows a Hamiltonian cycle instead. However, I-ADMM cannot guarantee the privacy for agents against external eavesdroppers even if the randomized initialization is applied. To protect privacy for agents, we then propose two privacy-preserving incremental ADMM algorithms, i.e., PI-ADMM1 and PI-ADMM2, where perturbation over step sizes and primal variables is adopted, respectively. Through theoretical analyses, we prove the convergence and privacy preservation for PI-ADMM1, which are further supported by numerical experiments. Besides, simulations demonstrate that the proposed PI-ADMM1 and PI-ADMM2 algorithms are communication efficient compared with state-of-the-art methods.

\end{abstract}

\begin{IEEEkeywords}
  Decentralized optimization; alternating direction method of multipliers (ADMM); privacy preservation.  
\end{IEEEkeywords}
\IEEEpeerreviewmaketitle

\section{Introduction}

Consider a decentralized network $\mathcal{G}=(\mathcal{V},\mathcal{E})$, where $\mathcal{V}=\{1,...,N\}$ is the set of agents with computing capability, and $\mathcal{E}$ is the set of links. $\mathcal{G}$ is connected and there is no center agent. The agents seek to solve the consensus optimization problem (\ref{eq1}) collaboratively through sharing information among each other,
\begin{equation}\label{eq1}
\min_{x}~ \sum_{i=1}^{N}f_i(x;\mathcal{D}_i),
\end{equation}
where $f_i:\mathbbm{R}^p\to\mathbbm{R}$ is the local function at agent $i$, and $\mathcal{D}_i$ is the private dataset at agent $i$. All agents share a common optimization variable $x\in\mathbbm{R}^p$. The decentralized optimization problem (\ref{eq1}) is widely applied in many areas such as signal processing \cite{5444944,6945888,7041200}, machine learning \cite{predd2009,forero2010,5499155}, wireless sensor networks (WSNs) \cite{6197748,ID} and smart grids \cite{GB,HJ,WPG}, just to name a few.

In the existing literature, e.g., \cite{WPG,DGD,EXTRA,JFC,7423789,zhang2014,WADMM,PWADMM}, a large number of decentralized algorithms have been investigated to solve the consensus problem (\ref{eq1}). Typically, the algorithms can mainly be classified into primal and primal-dual types, namely, gradient descent based (GD) methods and the alternating direction method of multipliers (ADMM) based methods, respectively. In this work, we will use ADMM as the optimizer, which can usually achieve more accurate consensus performance than GD based methods with constant step size \cite{PWADMM}.

Though distributed gradient descent (DGD) \cite{DGD}, EXTRA \cite{EXTRA} and distributed ADMM (D-ADMM) \cite{JFC} have good convergence rates with respect to the number of iterations, these algorithms require each agent to collect information from all its neighbors. This makes the amount of communication high for each iteration. Hence, for the applications with unstable links among agents such as WSNs, the communication load becomes one of the main bottlenecks. Moreover the \textit{straggler problem} is also pronounced in synchronous ADMM based methods \cite{zhang2014}, where the total running time is significantly increased due to slowly computing nodes. Besides, for the ADMM based decentralized approaches provided in \cite{zhang2014,JFC,7423789,WADMM,PWADMM}, agents are required to exchange primal and dual variables with neighbors in each iteration. This inevitably leads to the privacy problem if the local function $f_i$ and data set $\mathcal{D}_i$ are private to agent $i(\in\mathcal{N})$, for instance, in the practical applications with sensitive information such as personal medical or financial records, confidential business notes and so on. Thus, guaranteeing privacy is critical for decentralized consensus.
 

\subsection{Related Works}


To reduce the communication load for solving consensus problem (\ref{eq1}), various decentralized approaches have been proposed recently. Among these efforts, one important direction is to limit information sharing for each iteration. In \cite{7552562}, given an underlying graph, the weighted ADMM is developed by deleting some links prior to the optimization process. Communication-censored ADMM (COCA) in \cite{COCA} can adaptively determine whether a message is informative during the optimization process. Following COCA, an extreme scenario is to only activate one link in the graph per iteration such as the random-walk ADMM (W-ADMM) \cite{WADMM}, which incrementally updates the optimization variables. To achieve optimized trade-off between communication cost and running time, parallel random walk ADMM method (PW-ADMM) and intelligent PW-ADMM (IPW-ADMM) are proposed in \cite{PWADMM}. References \cite{4276978,5495951} also take advantages of random walks but deal with stochastic gradients. Another line of works is to exchange sparse or quantized messages to transmit fewer bits. Following this direction, the quantized ADMM is provided in \cite{7472455}. Two quantized stochastic gradient descent (SGD) methods, sign SGD with error-feedback (EF-SignSGD) and periodic quantized averaging SGD (PQASGD), are proposed in \cite{pmlrv97} and \cite{NIPS20181}, respectively. Aji et al. \cite{fikri2017sparse} presented a heuristic approach to truncate the smallest gradient components and only communicate the remaining large ones. However, these algorithms cannot guarantee exact convergence to the optimal solution \cite{8558107}. 


Meanwhile, with increasing concerns on data privacy, e.g., GDPR in Europe \cite{EUdataregulations2018}, more and more efforts are allocated to developing privacy preserving algorithms for solving the decentralized consensus problem (\ref{eq1}). To measure the privacy preserving performance, differential privacy has been used \cite{xiao2019local}. In general, artificial uncertainty is introduced over shared messages or local functions to maintain sensitive information secure against eavesdroppers. Differential private distributed algorithms based on GD have been presented in \cite{huang2015differentially,han2016differentially,nozari2016differentially}, where noise following different distributions is added to variable or gradients. Moreover, in primal-dual methods \cite{zhang,zhang18f,7563366,8772211}, perturbation is conducted either on penalty term, or on the primal and dual variables before sharing to neighboring agents. 
On the other hand, cryptographic based methods, such as (partial) homomorphic Encryption \cite{alexandru2018cloud,5156499}, are also adopted to protect the privacy of agents through applying the encryption mechanism to exchanged information. Other privacy-preserving approaches, including perturbing problems or states, are provided in \cite{mangasarian2012privacy,gade2017private}. However, these methods lead to high computational overheads and communication load especially in large-scale decentralized optimization.  



\subsection{Motivation and Contributions}

W-ADMM can significantly reduce communication load by activating only one node and link per iteration in a successive manner. Since the sequence of the updating order for agents is randomized by following a Markov chain, it is possible that the primal and dual variables at some agents are updated for much fewer rounds than others. Thus, the convergence speed may degrade. To guarantee agents not to be inactive for a long time, \textit{Random Walk with Choice} (RWC) is introduced in IPW-ADMM \cite{PWADMM} to reduce running time. The main principle of RWC is that through restricting the updating order of agents, the convergence speed can be improved for incremental approaches, which shall save communication load. This phenomenon is also supported by the walk proximal gradient (WPG) presented in \cite{WPG}, where the updating order of agents follows a \textit{Hamiltonian Cycle}. Therefore, in what follows, we will fix the updating order of agents as WPG and present the incremental ADMM (I-ADMM). Different from WPG, the I-ADMM is a primal-dual based approach. Moreover, comparing to PW-ADMM and IPW-ADMM, there is only one walk with a fixed order in I-ADMM.

To enable the privacy preservation for I-ADMM against the eavesdropper, which overhears all links in $\mathcal{E}$, we introduce uncertainty among local primal-dual variables and the transmitted tokens, e.g., through adding artificial noise over primal or dual variables \cite{zhang,7563366}. However, without scaling noise with iterations, there exists an error bound at convergence \cite{7563366}, which leads to a trade-off between privacy and accuracy. To avoid compromising convergence performance, we propose to perform perturbation over the step size for both updates of primal and dual variables, i.e. $\{x_i,y_i  \}_{i\in\mathcal{V}}$. Equivalently, this procedure can be seen as adding noise, which is scaled by the primal and dual residues, over $x_i^k$ and $y_i^k$, respectively for iteration $k$.

Different from the existed results, in what follows, we will study the ADMM based optimizer for solving (\ref{eq1}) from the aspect of both communication efficiency and privacy preservation. The main contributions of this paper can be summarized as follows.  
 \begin{itemize}

	\item We propose the I-ADMM method for solving decentralized consensus optimization (\ref{eq1}). Different from other incremental ADMM methods such as W-ADMM \cite{WADMM}, the update order of I-ADMM follows a Hamiltonian cycle, which can further reduce communication load and improve the convergence speed. 

    \item Since I-ADMM is not privacy-preserving, we first introduce the randomized initialization. To guarantee privacy preservation, we provide two privacy-preserving incremental ADMM (PI-ADMM) algorithms, i.e., PI-ADMM1 and PI-ADMM2, which apply perturbation over step size and primal variable, respectively. To the best of our knowledge, this is the first paper to consider privacy preservation for incremental ADMM.
    
    \item We prove the convergence of PI-ADMM1, and show that at convergence, the primal and dual variables generated by PI-ADMM1 satisfy the Karush-Kuhn-Tucker (KKT) conditions with mild assumptions over local functions. Besides, we prove that privacy preservation is guaranteed against eavesdropper or colluding agents.

 	\item Numerical results show that the proposed I-ADMM based algorithms are communication efficient compared with state-of-the-art methods. Moreover, we show that PI-ADMM1 can guarantee both privacy-preservation and accurate convergence.     
 
 \end{itemize}

 The remainder of this paper is organized as follows. We first introduce the incremental ADMM in Section II. To guarantee the privacy of local agents against external eavesdroppers, we present two PI-ADMM algorithms in Section III. The convergence and privacy analyses regarding proposed approaches are presented in Section IV. To validate the efficiency of proposed methods, we provide numerical experiments in Section V. Finally, Section VI concludes the paper.
 
 \section{Incremental ADMM}

 
By defining $ \bm{x}=[x_1,...,x_N]\in\mathbbm{R}^{pN} $, problem (\ref{eq1}) can be rewritten as
\begin{equation}\label{eq2}
\begin{aligned}
\min_{\bm{x}, z}~\sum_{i=1}^{N}f_i(x_i;\mathcal{D}_i),  ~~~
s.t.~  \mathbbm{1}\otimes z-\bm{x}=\bm{0},
\end{aligned}
\end{equation}
where $z\in\mathbbm{R}^p$, $\mathbbm{1}=[1,...,1]\in\mathbbm{R}^{N}$, and $\otimes$ is Kronecker product. For simplicity, we denote $f_i(x_i;\mathcal{D}_i)$ as $f_i(x_i)$. The augmented Lagrangian for problem (\ref{eq2}) is 
 \begin{figure}[t] 
	\vskip 0.2in
	\begin{center}
		\centerline{\includegraphics[width=76mm]{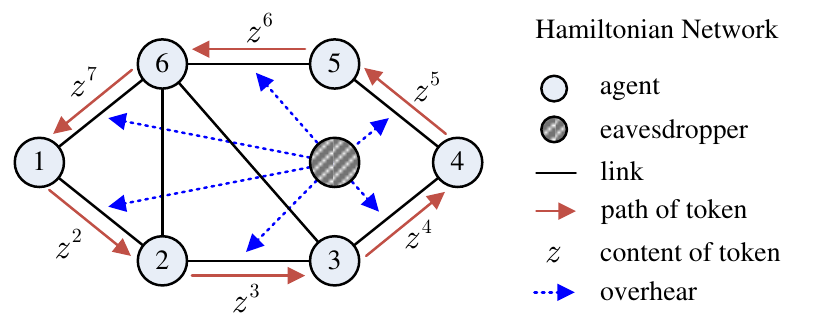}}
		\caption{An example of I-ADMM with an external eavesdropper.}
		\label{1}
	\end{center}
	\vskip -0.2in
\end{figure}

\begin{figure}[t] 
	\vskip 0.2in
	\begin{center}
		\centerline{\includegraphics[width=88mm]{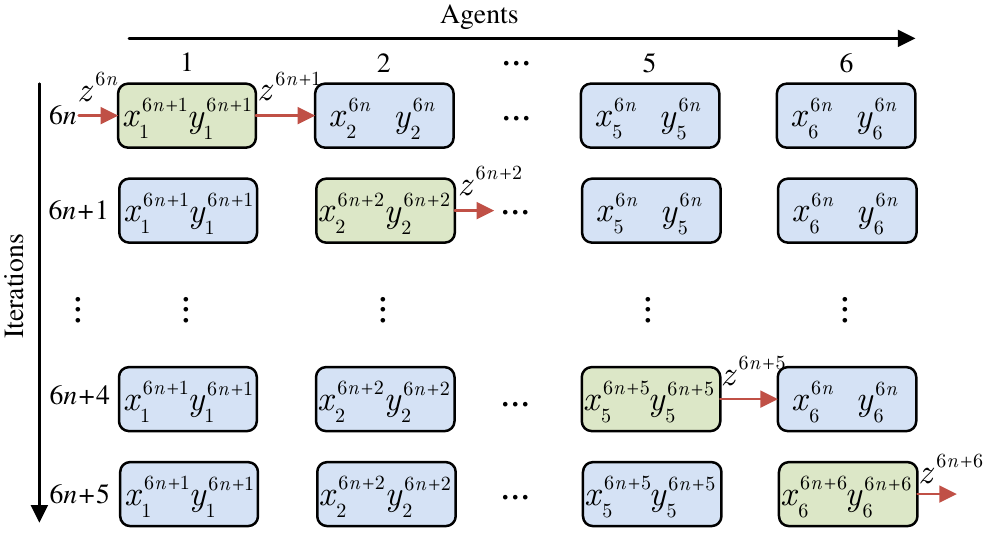}}
		\caption{Updating process of I-ADMM for example in Fig. 1 with iterations $[6n,6n+5]$ where $n\in\mathbbm{N}$.}
		\label{1}
	\end{center}
	\vskip -0.2in
\end{figure}
  \begin{algorithm}[t]
	\caption{I-ADMM } 
	\begin{algorithmic}[1]
		\STATE \textbf{initialize}: $\{z^0 = \bm{0}, x_i^0 =\bm{0}, y_i^0=\bm{0}, k=0 |i\in\mathcal{V}\}$; 
		\FOR{$k=0,1,...$}
		\STATE agent $i_k = k \mod N + 1$ do: 		 
		\STATE \textbf{receive} token $z^{k }$;   
		\STATE \textbf{update} $\bm{x}^{k+1}$ according to (\ref{eq4b});
		\STATE \textbf{update} $\bm{y}^{k+1}$ according to (\ref{eq4c});
		\STATE \textbf{update} $z^{k+1}$ according to (\ref{eq6});
		\STATE \textbf{send} token $z^{k+1} $ to agent $i_{k+1}=k \mod N + 2$;  
		\ENDFOR 		
	\end{algorithmic} 
\end{algorithm} 
\begin{equation}
\mathcal{L}_{\rho}(\bm{x},\bm{y},z)=\sum _{i=1}^{N}f_i(x_i) + \left\langle \bm{y} ,\mathbbm{1}\otimes z-\bm{x}   \right\rangle + \frac{\rho}{2}\norm{\mathbbm{1}\otimes z-\bm{x}}^2,
\end{equation}
where $ \bm{y}=[y_1,...,y_N]\in\mathbbm{R}^{pN}$ is the dual variable, while $\rho>0$ is a constant parameter. Following W-ADMM \cite{WADMM}, with guaranteeing $\sum_{i=1}^{N}(x_i^0-\frac{y_i^0}{\rho})=\bm{0}$, the updates of $\bm{x}$, $\bm{y}$ and $z$ at the $(k+1)$-th iteration are given by
  \begin{subequations}
	\begin{align}		
	&x_i^{k+1}:=\left\{\begin{aligned}
	&\arg \min_{x_i} f_i(x_i) + \frac{\rho}{2} \left\|z^{k }-x_i+\frac{y_i^k}{\rho} \right\|^2 ,~i=i_k ;\\
	&x_i^{k },~i\in\mathcal{V},i\neq i_k;
	\end{aligned}  \right. \label{eq4b}\\
	&y_i^{k+1}:=\left\{\begin{aligned}
	& y_i^{k} + \rho \left( z ^{k}-x_{i}^{k +1} \right),~i=i_k ;\\
	& y_i^{k },~i\in\mathcal{V},i\neq i_k ;\label{eq4c}
	\end{aligned}  \right. \\
	& z ^{k+1}:=\frac{1}{N}\sum_{i=1}^{N} \left(x_i^{k+1}-\frac{y^{k+1}_i}{\rho} \right).\label{eq4a}  
	\end{align}	
\end{subequations}
Note that the updates (\ref{eq4b})-(\ref{eq4a}) are not decentralized since (\ref{eq4a}) collects information from all agents. With initializing $\bm{x}^0=\bm{0}$ and $\bm{y}^0=\bm{0}$, it is easy to verify that the update of $z$ can be \textit{incrementally} implemented as
\begin{equation}
 z^{k+1} = z^{k } + \frac{1}{N} \left[ \left(x_{i_k}^{k+1}-\frac{y_{i_k}^{k+1}}{\rho}  \right) -  \left(x_{i_k}^{k }-\frac{y_{i_k}^{k }}{\rho}  \right)  \right].\label{eq6}
\end{equation}
Then the decentralized implementation of I-WADMM is presented in Algorithm 1. Different from W-ADMM, the agents are activated in a predetermined circulant pattern for I-ADMM, where the activated agent in the $k$-th iteration is $i=k\mod N + 1$. Furthermore, we assume walk $\langle i_k \rangle_{k\geq 0}$ repeats a Hamiltonian cycle order: $1\to 2\to \cdots \to N \to 1\to 2\to \cdots$ as WPG \cite{WPG}. In I-ADMM, the variable $z^{k+1}$ gets updated at agent $i_k$ and passed as a token to its neighbor $i_{k+1}$ through  Hamiltonian cycle. In Fig. 2, we present the evolution of $\{x_i,y_i\}$ at agents and the transition of token $z$ when I-ADMM is applied to the example shown in Fig. 1.


\subsection{Convergence Analysis}

 To start with the convergence analysis for I-ADMM, we first make the following assumptions over network $\mathcal{G}$ and local functions $\{f_i\}$. For simplicity, we denote $F(\bm{x})=\sum_{i=1}^{N}f_i(x_i)$.

\begin{assumption} 
Graph $\mathcal{G}$ is well connected and there exists a Hamiltonian cycle.
\end{assumption}

\begin{assumption}
	The objective function $F(\bm{x})$ is bounded from below over $\bm{x}$, and $F(\bm{x})$ is coercive over $\bm{x}$. Each $f_i(x)$ is L-Lipschitz differentiable, i.e., for any $u,v\in\mathbbm{R}^p$, 
	\begin{align}
		\left\| \nabla f_i(u)-\nabla f_i(v)  \right\| \leq L \left\|u-v  \right\|, ~i=1,...,N. 
	\end{align}
\end{assumption}

With Assumption 2, we obtain the following useful inequality for loss function $ f_i$.
\begin{lemma}
	For any $u,v\in \mathbbm{R}^p$, we have
	\begin{equation}\label{eq7}
		f_i(u)\leq f_i(v) + \left\langle \nabla f_i(v), u-v \right\rangle +\frac{L}{2}\|u-v \|^2.
	\end{equation}
\end{lemma}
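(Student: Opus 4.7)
The plan is to prove this standard descent lemma (sometimes called the quadratic upper bound or ``Lipschitz gradient inequality'') directly from Assumption 2, using the fundamental theorem of calculus along the line segment joining $v$ and $u$.

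First I would parametrize the segment from $v$ to $u$ by $\gamma(t) = v + t(u-v)$ for $t \in [0,1]$ and apply the fundamental theorem of calculus to the composition $t \mapsto f_i(\gamma(t))$. This yields the identity $f_i(u) - f_i(v) = \int_0^1 \langle \nabla f_i(v+t(u-v)), u-v\rangle\, dt$, which is well defined because $f_i$ is differentiable (indeed $L$-Lipschitz differentiable) on $\mathbb{R}^p$.

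Next I would subtract $\langle \nabla f_i(v), u-v\rangle$ from both sides and rewrite the right-hand side as $\int_0^1 \langle \nabla f_i(v+t(u-v)) - \nabla f_i(v), u-v\rangle\, dt$. Applying Cauchy--Schwarz inside the integral, followed by the $L$-Lipschitz bound from Assumption 2 with the points $v+t(u-v)$ and $v$, bounds the integrand by $L\, t\, \|u-v\|^2$. Integrating $Lt$ from $0$ to $1$ produces the factor $L/2$ and yields the claim.

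There is no real obstacle here; the only subtlety is justifying that the scalar function $t \mapsto f_i(\gamma(t))$ is absolutely continuous so that the fundamental theorem of calculus applies, which follows immediately from the $L$-Lipschitz differentiability assumed in Assumption 2 (in fact $\nabla f_i$ is continuous, so $f_i \circ \gamma$ is $C^1$ on $[0,1]$). No convexity of $f_i$ is required, which is important since Assumption 2 does not impose it.
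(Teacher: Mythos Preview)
Your argument is correct and is exactly the standard proof of the descent lemma. The paper itself does not actually prove this inequality; its entire ``proof'' is the single sentence ``Inequality (\ref{eq7}) is well known, see \cite{Ortega}.'' So your proposal is more detailed than what appears in the paper, but the line-integral argument you outline is precisely the classical derivation one would find in the cited reference, and nothing further is needed.
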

\begin{proof}
 Inequality (\ref{eq7}) is well known, see \cite{Ortega}.
\end{proof}
Then according to Lemma 1 and Theorem 1 in \cite{WADMM}, we have the following convergence property for I-ADMM.
\begin{lemma}\label{theorem1}
	Under Assumptions 1 and 2, for $\rho \geq 2L+2$, the iterations $(\bm{x}^k,\bm{y}^k,z^k)$ generated by I-ADMM satisfy the following properties:
	\begin{enumerate}
		\item $\mathcal{L}_{\rho}(\bm{x}^k,\bm{y}^k,z^k)-\mathcal{L}_{\rho}(\bm{x}^{k+1},\bm{y}^{k+1},z^{k+1})\geq 0$;
		\item $\langle \mathcal{L}_{\rho}(\bm{x}^k,\bm{y}^k,z^k) \rangle_{k\geq0} $ is lower bounded and convergent;
		\item $\lim\limits_{k\to \infty} \left\| \nabla \mathcal{L}_{\rho}(\bm{x}^k,\bm{y}^k,z^k)\right\|=0 .$
	\end{enumerate}
\end{lemma}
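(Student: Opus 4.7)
The plan is to follow the descent-lemma framework of Theorem 1 in \cite{WADMM}, observing that its proof does not rely on the random-walk statistics but only on the fact that each iteration activates a single agent and produces a sufficient Lagrangian decrease. The deterministic Hamiltonian-cycle order of I-ADMM fits the same template, so the heart of the work is to reverify this per-iteration decrease, after which (2) and (3) follow from standard arguments.

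For item (1), I would decompose
\begin{align*}
\mathcal{L}_\rho^k - \mathcal{L}_\rho^{k+1} = \Delta_x + \Delta_y + \Delta_z
\end{align*}
according to the sequential updates of $\bm{x}$, $\bm{y}$, and $z$. Since only block $i_k$ is touched, the first two terms reduce to single-block computations. For $\Delta_x$ I would combine the optimality condition of (\ref{eq4b}) with the Lipschitz-smoothness bound of Lemma 1 to get $\Delta_x \geq \frac{\rho - L}{2}\|x_{i_k}^{k+1} - x_{i_k}^k\|^2$. A direct expansion from (\ref{eq4c}) yields $\Delta_y = -\frac{1}{\rho}\|y_{i_k}^{k+1} - y_{i_k}^k\|^2$, and combining the $x$- and $y$-update optimality conditions gives $y_{i_k}^{k+1} = \nabla f_{i_k}(x_{i_k}^{k+1})$, whence Lemma 1 delivers $\|y_{i_k}^{k+1} - y_{i_k}^k\|^2 \leq L^2\|x_{i_k}^{k+1} - x_{i_k}^k\|^2$. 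For $\Delta_z$, I would prove by induction — using $\bm{x}^0 = \bm{y}^0 = \bm{0}$, $z^0 = 0$ together with the incremental rule (\ref{eq6}) — that the invariant $\sum_i (x_i^k - y_i^k/\rho) = N z^k$ holds for all $k$, so that $z^{k+1}$ is the exact unconstrained minimizer of $\mathcal{L}_\rho(\bm{x}^{k+1},\bm{y}^{k+1},\cdot)$ and $\Delta_z = \frac{\rho N}{2}\|z^{k+1} - z^k\|^2 \geq 0$. Collecting everything gives $\mathcal{L}_\rho^k - \mathcal{L}_\rho^{k+1} \geq \bigl(\frac{\rho - L}{2} - \frac{L^2}{\rho}\bigr)\|x_{i_k}^{k+1} - x_{i_k}^k\|^2$, and $\rho \geq 2L + 2$ is precisely what forces this coefficient to be strictly positive.

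For item (2), I would rewrite $\mathcal{L}_\rho^k$ by substituting the identity $y_i = \nabla f_i(x_i)$ (valid at every block at its most recent activation) and absorbing the inner-product term through the Lipschitz-gradient inequality, so that $\mathcal{L}_\rho^k$ becomes $F(\bm{x}^k)$ plus a non-negative penalty residual; Assumption 2 then gives $\inf_k \mathcal{L}_\rho^k > -\infty$, and combined with the monotone non-increase from (1), convergence of $\langle \mathcal{L}_\rho^k \rangle_{k \geq 0}$ follows. Item (3) then comes from telescoping the decrease inequality of (1): $\sum_k \|x_{i_k}^{k+1} - x_{i_k}^k\|^2 < \infty$ forces $\|x_{i_k}^{k+1} - x_{i_k}^k\| \to 0$, and hence $\|y_{i_k}^{k+1} - y_{i_k}^k\| \to 0$ and $\|z^{k+1} - z^k\| \to 0$. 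Under Assumption 1 the Hamiltonian cycle refreshes every coordinate at least once per $N$ iterations, so the vanishing per-step differences propagate to every block and drive every partial gradient of $\mathcal{L}_\rho$ to zero.

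The main obstacle I anticipate is establishing the $z$-invariant $\sum_i (x_i^k - y_i^k/\rho) = N z^k$ and reconciling the incremental token rule (\ref{eq6}) with the ``centralized'' form (\ref{eq4a}); without this, the sign of $\Delta_z$ cannot be fixed. A secondary technical point is the tight balancing that produces the constant $\rho \geq 2L + 2$: the $\rho$-strong convexity gain in $\Delta_x$ must dominate both the Lipschitz-gradient loss in $\Delta_x$ and the $L^2/\rho$ loss appearing in $\Delta_y$, and any slack in these estimates would inflate the required threshold on $\rho$.
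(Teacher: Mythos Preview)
Your proposal is correct and follows essentially the same approach as the paper, which simply defers to Theorem~1 of \cite{WADMM} and notes that the Hamiltonian cycle guarantees each agent is visited with fixed period $N$ (i.e., $\mathcal{J}(\delta)=N$ in their notation). Your outline is precisely the unpacking of that reference---the single-block descent decomposition, the identity $y_{i_k}^{k+1}=\nabla f_{i_k}(x_{i_k}^{k+1})$, the $z$-invariant from (\ref{eq6}), and the telescoping/periodic-visit argument are all the standard W-ADMM machinery; the only minor slip is that your coefficient $\tfrac{\rho-L}{2}-\tfrac{L^2}{\rho}$ is already nonnegative at $\rho\geq 2L$, so $\rho\geq 2L+2$ is sufficient rather than ``precisely'' tight.
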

 \begin{proof}
 	The convergence proof of I-ADMM is similar to that of W-ADMM in \cite{WADMM}. The difference is that as Assumption 1 in \cite{WADMM}, each agent is visited with a fixed period, i.e. $\mathcal{J}(\delta)=N$.
 \end{proof}
 
\subsection{Computation Efficient I-ADMM}
 With complex local function $f_i$, the computation load for solving minimization problem (\ref{eq4b}) is high. To alleviate computation burden for the update of $\bm{x}^{k+1}$ in I-ADMM, we can apply the first-order approximation by replacing (\ref{eq4b}) with update:
 
	\begin{align}	 
	&x_i^{k+1}:=\left\{\begin{aligned}
	&z^{k } + \frac{y_i^k}{\rho}-\frac{1}{\rho}\nabla f_i\left(x_i^k  \right),~i=i_k ;\\
	&x_i^{k },~i\in\mathcal{V},i\neq i_k.
	\end{aligned}  \right. \label{eq4b11} 
	\end{align}	
 Comparing to (\ref{eq4b}), update process (\ref{eq4b11}) can save computation. However, it may cause higher communication loads to achieve the same accuracy, as shown in \cite{ye2019decentralized}. In what follows, we will focus on investigating the privacy preserving I-ADMM based on Algorithm 1. Besides, the obtained results can also be extended to the computation efficient I-ADMM methods with privacy preservation. 
 
\section{Incremental ADMM with Privacy Preservation}
 
In this section, we will first analyze the privacy of Algorithm 1. Then to guarantee the security for agents in solving (\ref{eq2}), we propose the PI-ADMM algorithms.

\subsection{Review of I-ADMM} 
  Assume that there exists an external eavesdropper overhearing all the links, as shown in Fig. 1. While the agents want to protect their private functions and datasets from the eavesdropper. The privacy property of I-ADMM is analyzed as follows.
 \begin{lemma}\label{lemma1}
 	In Algorithm 1, the intermediate states and gradients $\{x_{i }^{k+1},y_{i }^{k+1},\nabla f_{i }(x_{i }^{k +1})|i\in\mathcal{V}, k =0,1,... \}$ can be inferred by the eavesdropper. 
 \end{lemma}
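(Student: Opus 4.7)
The plan is to show that the eavesdropper, who observes every token $z^k$ transmitted along the edges and knows the initialization $\bm{x}^0 = \bm{y}^0 = \bm{0}$, $z^0 = \bm{0}$, the constant $\rho$, and the Hamiltonian order $i_k = k \bmod N + 1$, can reconstruct $x_{i_k}^{k+1}$, $y_{i_k}^{k+1}$, and $\nabla f_{i_k}(x_{i_k}^{k+1})$ by induction on $k$. Since in each iteration only agent $i_k$ updates its local variables while all other $(x_j, y_j)$ remain unchanged, recovering the active agent's quantities at every $k$ is equivalent to recovering the entire trajectory.

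The key algebraic step is to invert the token recursion (\ref{eq6}) by eliminating $y_{i_k}^{k+1}$ through the dual update (\ref{eq4c}). Dividing (\ref{eq4c}) by $\rho$ gives $y_{i_k}^{k+1}/\rho = y_{i_k}^k/\rho + z^k - x_{i_k}^{k+1}$, and substituting into (\ref{eq6}) yields
\begin{equation}
N\bigl(z^{k+1}-z^k\bigr) = 2 x_{i_k}^{k+1} - x_{i_k}^k - z^k,
\end{equation}
so that
\begin{equation}
x_{i_k}^{k+1} = \tfrac{1}{2}\bigl( N(z^{k+1}-z^k) + x_{i_k}^k + z^k \bigr).
\end{equation}
I would then argue by induction that $x_{i_k}^k$ is already known to the eavesdropper: at the first activation of agent $i_k$ it equals the initial value $\bm{0}$, and at every subsequent activation it coincides with the value $x_{i_k}^{k'+1}$ computed at the previous visit $k' = k - N$, which is known by the induction hypothesis. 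Once $x_{i_k}^{k+1}$ is reconstructed, $y_{i_k}^{k+1}$ follows directly from (\ref{eq4c}) using the previously known $y_{i_k}^k$.

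To recover the gradient, I would write the first-order optimality condition for the proximal subproblem (\ref{eq4b}),
\begin{equation}
\nabla f_{i_k}(x_{i_k}^{k+1}) + \rho\bigl( x_{i_k}^{k+1} - z^k - y_{i_k}^k/\rho \bigr) = 0,
\end{equation}
and then use the dual update (\ref{eq4c}) to identify the second term as $-y_{i_k}^{k+1}$, giving the compact identity $\nabla f_{i_k}(x_{i_k}^{k+1}) = y_{i_k}^{k+1}$. Since the right-hand side has just been inferred, the gradient is exposed as well.

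I do not expect a genuine obstacle here: the argument is essentially a solvability check on a triangular system indexed by iteration. The only subtlety worth stating carefully is the bookkeeping that justifies $x_{i_k}^{k} = x_{i_k}^{(k-N)+1}$ for $k \geq N$, which relies on the fact that the Hamiltonian update order deterministically revisits every agent once per cycle of length $N$ and that inactive agents' variables are frozen between visits. Once that bookkeeping is laid out, everything else is immediate algebra.
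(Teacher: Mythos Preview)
Your proposal is correct and follows essentially the same approach as the paper: both derive the recursion $x_{i_k}^{k+1} = \tfrac{1}{2}\bigl(N(z^{k+1}-z^k) + z^k + x_{i_k}^k\bigr)$ by combining (\ref{eq4c}) with (\ref{eq6}), then recover $y_{i_k}^{k+1}$ and invoke the optimality identity $\nabla f_{i_k}(x_{i_k}^{k+1}) = y_{i_k}^{k+1}$, closing with induction on the known initialization. The paper packages the two recovered quantities into a single $2\times 2$ system, whereas you sequentially solve for $x_{i_k}^{k+1}$ first and then substitute into (\ref{eq4c}), but this is purely a cosmetic difference.
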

 \begin{proof}
 	 Assume that the eavesdropper has the knowledge of $x_{i_k}^{k }$ and $y_{i_k}^{k }$ where $k \geq 0$. Through the update process (\ref{eq4b}), the eavesdropper can establish the following equations
 	\begin{align} 
 		\nabla f_{i_k} \left(x_{i_k}^{k +1} \right)&= y_{i_k}^{k +1} .\label{eq11}
 	\end{align}
 	Hence once $y_{i_k}^{k +1}$ is inferred by the master, $\nabla f_{i_k} (x_{i_k}^{k +1} )$ will be revealed as well. Combining the update process (\ref{eq4c}) and (\ref{eq6}), we have
 	\begin{equation}\label{eq13} 	
 	\left\{ 
 	\begin{aligned} 	
 		&y_{i_k}^{k +1} = y_{i_k}^{k } + \rho \left(z^{ k   } - x_{i_k}^{k +1} \right); \\
 		&\Delta^{k+1} = \frac{1}{N}\left[ \left(x_{i_k}^{k +1}-\frac{y_{i_k}^{k +1}}{\rho }  \right)- \left(x_{i_k}^{k}-\frac{y_{i_k}^{k}}{\rho } \right) \right],
 		\end{aligned}	
 		\right.		
 	\end{equation} 
  where $\Delta^{k+1}=z^{k+1}-z^{k }$. Through rearranging (\ref{eq13}), the recursive equations are given by
 	\begin{equation}\label{eq12}
 		\left\{ 
 		\begin{aligned}
 		x_{i_k}^{k +1} =& \frac{1}{2} \left(  N \Delta^{k+1}+ z^{k } + x_{i_k}^{k} \right);\\
 		y_{i_k}^{k+1} =& y_{i_k}^{k}+\frac{\rho}{2} \left(  z^{k } - N \Delta^{k+1} -x_{i_k}^{k } \right) ,
 		\end{aligned}
 		 \right.
 	\end{equation}  
 	 Since the eavesdropper also overhears $z^{k +1}$ and $z^{k }$, (\ref{eq12}) consists of 2 equations with 2 variables. Hence $x_{i_k}^{k +1}$ and $y_{i_k}^{k+1}$ can be obtained by solving (\ref{eq12}). In I-ADMM, since $\{x_i^0,y_i^0|i\in\mathcal{V}\}$ are known to the eavesdropper, with (\ref{eq11}) and the update (\ref{eq4b}), (\ref{eq4c}), we can conclude that $\{x_i^{k },y_i^{k },\nabla f_i(x_i^{k })|i\in\mathcal{V},k =0,1,...  \}$ can also be inferred. 
 \end{proof}
In addition, with $\{\nabla f_i(x_i^{k})|i\in\mathcal{V},k =0,1,... \}$, the private functions $\{f_i \}$ can be inferred by the eavesdropper \cite{zhang}.
 

 \begin{algorithm}[t]
 	\caption{I-ADMM with Random Initialization} 
 	\begin{algorithmic}[1]
 		\STATE \textbf{initialize}: $z^1=\bm{0} $;
 		\FOR{$k=0,...,N-1$}	 		 
 		\STATE agent $i  = k \mod N + 1$ do:	 
 		\STATE \textbf{generate} $ \upsilon\in\mathbbm{R}^p$ randomly;
 		\STATE \textbf{set} $x_i^0\gets \upsilon$ and $y_i^0\gets -\rho \upsilon $;
 		\ENDFOR				
 		\STATE \textbf{run} steps 2-9 of Algorithm 1.
 	\end{algorithmic} 
 \end{algorithm}

\subsection{Privacy-preserving I-ADMM}

  Inspired by the privacy-preservation definitions in \cite{zhang,yan2013,lou2018}, we define the privacy as follows.     
\begin{definition}
	A mechanism $\mathcal{M}: \mathcal{M}(\mathcal{X})\to   \mathcal{Y}$ is defined to be privacy preserving if the input $\mathcal{X}$ cannot be uniquely derived from the output $\mathcal{Y}$.
\end{definition}

\begin{algorithm}[t]
	\caption{PI-ADMM1 with Step Size Perturbation} 
	\begin{algorithmic}[1]
		\STATE Follow steps of Algorithm 2, but substitute steps 6 and 7 of Algorithm 1 with the following 4 steps; 
		\STATE \textbf{generate} $\gamma_{i_k}^{k}\sim \mathbbm{P}_{i_k}$;
		\STATE \textbf{set} $\tilde{\rho}_{i_k}^{k } \gets \rho  \cdot \gamma_{i_k}^{k} $;
		\STATE \textbf{update} $\bm{x}^{k+1}$ by 
		\begin{align}		
		&x_i^{k+1}:=\left\{\begin{aligned}
		&\arg \min_{x_i} f_i(x_i) + \frac{\tilde{\rho}^k_i }{2} \left\|z^{k }-x_i+\frac{y_i^k}{\tilde{\rho}^k_i } \right\|^2 ,~i=i_k ;\\
		&x_i^{k },~\text{o.w.};
		\end{aligned}  \right. \label{eq4a11}  
		\end{align}	
		\STATE \textbf{update} $\bm{y}^{k+1}$ by 
		\begin{align}			    
		&y_i^{k+1}:=\left\{\begin{aligned}
		& y_i^{k} + \tilde{\rho}^k_i \left( z ^{k}-x_{i}^{k +1} \right),~i=i_k ;\\
		& y_i^{k },~\text{o.w.} .\label{eq4c1}
		\end{aligned}  \right. 
		\end{align}		
	\end{algorithmic} 
\end{algorithm}

 \begin{algorithm}[t]
 	\caption{PI-ADMM2 with Primal Perturbation } 
 	\begin{algorithmic}[1]
 		\STATE Follow steps of Algorithm 2, but substitute step 5 of Algorithm 1 with the following two steps; 
 		\STATE \textbf{generate} $\omega_{i_k}^k\sim \mathcal{N}(0,\sigma )$;
 		\STATE \textbf{update} $\bm{x}^{k+1}$ by 
 		\begin{align} 
 		&x_i^{k+1}:=\left\{\begin{aligned}
 		&\arg \min_{x_i} f_i(x_i) + \frac{\rho}{2}\left\|z^{k}-x_i+\frac{y_i^k}{\rho} \right\|^2 +  \omega_i^k, ~i=i_k ;\\
 		&x_i^{k },~\text{o.w.}.
 		\end{aligned}  \right. \label{eq4a1} 
 		\end{align}	
 		
 	\end{algorithmic} 
 \end{algorithm}

Hence from Lemma 1, I-ADMM is not privacy preserving. Recalling the proof of Lemma \ref{lemma1}, by making $x_i^0$ and $y_i^0$ private, information $\{x_i^{k},y_i^{k},\nabla f_i(x_i^{k})|k=0,1,...\}$ cannot be inferred by the eavesdropper exactly from the recursive equation (\ref{eq12}) if the states and multipliers are unknown at the end of iteration. In fact, to implement the incremental update (\ref{eq6}), we only need to ensure $\sum_{i=1}^{N}(x_i^0-\frac{y_i^0}{\rho})=\bm{0}$. To introduce uncertainty, we randomize the initialization and present Algorithm 2, where variables $x_i^0$ and $y_i^0$ are randomly generated at agent $i$ such that (\ref{eq4a}) is satisfied. Then Algorithm 1 is carried out with initialization $\{ x_i^0=\upsilon ,y_i^0=-\rho \upsilon |i\in\mathcal{V}\}$.

To enhance the privacy of Algorithm 2, we can introduce more uncertainty by applying perturbation on primal and dual updates. Through substituting the update of primal and dual variables with (\ref{eq4a11}) and (\ref{eq4c1}), respectively, where the step size $\rho$ is multiplied by a perturbation $\gamma_{i_k}^k$, we can obtain the PI-ADMM1 in Algorithm 3. It is worth noting that (\ref{eq4c1}) can be regarded as introducing additive noise, i.e., $\rho(\gamma_{i_k}^k-1)(z^{k+1}-x_{i_k}^{k+1})$, over the dual variable updating, which is scaled by the primal residue. Regarding to the primal update with the first order approximation, it can also be expressed as appending the scaled noise $\frac{1}{\rho}(\frac{1}{\gamma_{i_k}^k}-1)(y_{i_k}^k-\nabla f_{i_k}(x_{i_k}^k))$ over $x_{i_k}^{k+1}$. Different from existing ADMM based privacy-preserving methods \cite{zhang,xiao2019local}, we randomly generate initialization in PI-ADMM1 instead of implementing encryption for each iteration. Thus, the computation complexity and communication load of PI-ADMM1 are significantly reduced. In addition, the applied perturbation mechanism avoids computation over step size $\rho$ as Algorithm 1 in \cite{zhang}.

From the results of \cite{xiao2019local}, another approach to enhance the privacy of PI-ADMM1 is perturbing the primal updates by adding noise according to fixed stochastic, which is illustrated in Algorithm 4. For the $k$-th iteration, an artificial noise $\omega_{i_k}^k$ generated at agent $i_k$ locally is affiliated over $x_{i_k}^{k+1}$. From the analyses in \cite{xiao2019local}, this perturbation mechanism can also guarantee the privacy for agents. However, due to page limitation, we only evaluate the convergence property for PI-ADMM2 in Section V. 

\section{Convergence and Privacy Analysis}
In this section, we will first prove the convergence for PI-ADMM1 and PI-ADMM2. Then privacy analysis is provided.
\subsection{Convergence Analysis}
We first show the convergence of Algorithm 2. 
\begin{remark}
	Algorithm 2 has the same convergence properties as those of I-ADMM.
\end{remark}
\begin{proof}
	Since $\sum_{i=1}^{N}(x_i-\frac{y_i}{\rho})=\bm{0}$ is guaranteed from steps 2-6 in Algorithm 2, the convergence can be proved as Lemma 2.
\end{proof}

Denoting $x^*=\arg \min_x F(x)$, then with Assumption 2, The convergence properties of PI-ADMM1 is provided as follows.
\begin{lemma}\label{lemma2}
	With the following conditions
	\begin{subequations}
		\begin{align}
		\rho &>L; \\\gamma_{i_k}^k&> \max\left\{\frac{2\rho^2 + 4\rho +1}{\rho-L} ,2(\rho+2)N  \right\}, k=0,1,...,
		\end{align}
	\end{subequations}	
the iterations $(\bm{x}^k,\bm{y}^k,z^k)$ generated by PI-ADMM1 satisfy the following properties:
	\begin{enumerate}
		\item $\mathcal{L}_{\rho}(\bm{x}^k,\bm{y}^k,z^{k })-\mathcal{L}_{\rho}(\bm{x}^{k+1},\bm{y}^{k+1},z^{k+1})\geq 0$;
		\item $\langle \mathcal{L}_{\rho}(\bm{x}^k,\bm{y}^k,z^{k }) \rangle_{k\geq0} $ is lower bounded by optimum $F(x^*)$ and convergent;
	\end{enumerate}
\end{lemma}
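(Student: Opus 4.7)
The plan is to adapt the three-block descent framework used for I-ADMM in Lemma 2 to the perturbed step size $\tilde{\rho}_{i_k}^k=\rho\gamma_{i_k}^k$. The crucial point is that the augmented Lagrangian $\mathcal{L}_\rho$ is evaluated at the fixed penalty parameter $\rho$, whereas the primal and dual updates (\ref{eq4a11}) and (\ref{eq4c1}) are driven by the randomized $\tilde{\rho}_{i_k}^k$; this mismatch will generate residue terms that must be absorbed by choosing $\gamma_{i_k}^k$ large. A single-agent-per-iteration structure is preserved, so only the block indexed by $i_k$ will actually move.

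I would first telescope $\mathcal{L}_\rho(\bm{x}^k,\bm{y}^k,z^k)-\mathcal{L}_\rho(\bm{x}^{k+1},\bm{y}^{k+1},z^{k+1})$ through the intermediate states $(\bm{x}^{k+1},\bm{y}^k,z^k)$ and $(\bm{x}^{k+1},\bm{y}^{k+1},z^k)$ so that only the $i_k$-th block changes in each sub-difference. For the $y$-block, substituting (\ref{eq4c1}) produces a term of the form $-\|y_{i_k}^{k+1}-y_{i_k}^k\|^2/\tilde{\rho}_{i_k}^k$, whose magnitude shrinks with $\gamma_{i_k}^k$. For the $x$-block, the first-order optimality of (\ref{eq4a11}) gives $\nabla f_{i_k}(x_{i_k}^{k+1})=y_{i_k}^{k+1}$, and Lemma 1 combined with $(\tilde{\rho}_{i_k}^k-L)$-strong convexity of the perturbed subproblem furnishes a descent of order $\tfrac{\tilde{\rho}_{i_k}^k-L}{2}\|x_{i_k}^{k+1}-x_{i_k}^k\|^2$, minus a gap $\tfrac{\rho-\tilde{\rho}_{i_k}^k}{2}\|z^k-x_{i_k}^{k+1}\|^2$ that arises because $\mathcal{L}_\rho$ uses $\rho$ rather than $\tilde{\rho}_{i_k}^k$; this gap is negative and I would control it using $y_{i_k}^{k+1}-y_{i_k}^k=\tilde{\rho}_{i_k}^k(z^k-x_{i_k}^{k+1})$. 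For the $z$-block, the incremental identity (\ref{eq6}) converts the difference into $-\tfrac{N\rho}{2}\|z^{k+1}-z^k\|^2$ modulo linear terms that I would couple back to the primal and dual residuals through (\ref{eq12}). Summing the three pieces and demanding that the net coefficient of every squared term be non-negative gives precisely $\rho>L$ together with $\gamma_{i_k}^k>\max\{(2\rho^2+4\rho+1)/(\rho-L),\,2(\rho+2)N\}$, establishing conclusion (1).

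For conclusion (2), I would use the identity $y_i^{k}=\nabla f_i(x_i^{k})$, which follows from the $x$- and $y$-updates at each agent's last active iteration, to eliminate $\bm{y}^k$ from $\mathcal{L}_\rho(\bm{x}^k,\bm{y}^k,z^k)$. Completing the square then bounds $\mathcal{L}_\rho$ below by $F(\bm{x}^k)$ minus a multiple of $\sum_i\|\nabla f_i(x_i^k)\|^2/\rho$; by the $L$-Lipschitz property and $\rho>L$, this excess is absorbed and I obtain $\mathcal{L}_\rho(\bm{x}^k,\bm{y}^k,z^k)\geq F(x^*)$ using the lower boundedness of $F$ from Assumption 2. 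Combined with the monotone decrease from (1), this gives convergence of $\langle\mathcal{L}_\rho(\bm{x}^k,\bm{y}^k,z^k)\rangle_{k\geq 0}$.

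The chief obstacle will be disentangling the two perturbation residues $(\rho-\tilde{\rho}_{i_k}^k)$ in the quadratic part of $\mathcal{L}_\rho$ and $1/\tilde{\rho}_{i_k}^k$ in the dual step: coupling them consistently through (\ref{eq12}) without over-counting is what determines both thresholds on $\gamma_{i_k}^k$, and in particular the factor $2(\rho+2)N$, which traces back to bounding $\|z^{k+1}-z^k\|$ against the primal residual of agent $i_k$ amplified by the incremental factor $1/N$. Getting the algebra tight enough to recover the explicit constants $2\rho^2+4\rho+1$ and $2(\rho+2)N$ simultaneously, rather than a coarser single threshold, is the delicate part of the bookkeeping.
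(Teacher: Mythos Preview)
Your plan for conclusion (1) is essentially the paper's argument: the optimality condition of the perturbed subproblem gives $\nabla f_{i_k}(x_{i_k}^{k+1})=y_{i_k}^{k+1}$, Lemma~1 handles the $f$-difference, and the identity $\rho(z^k-x_{i_k}^{k+1})=\tfrac{1}{\gamma_{i_k}^k}(y_{i_k}^{k+1}-y_{i_k}^k)$ together with Young's inequality controls the cross terms. Two small corrections. First, the $z$-block contributes $+\tfrac{\rho N}{2}\|z^{k+1}-z^k\|^2$ to the decrease, not minus, and there are no leftover linear terms: since $z^{k+1}=\tfrac{1}{N}\sum_i(x_i^{k+1}-y_i^{k+1}/\rho)$ is the exact quadratic minimizer, the first-order part vanishes identically, so nothing needs to be ``coupled back'' via (\ref{eq12}). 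Second, the two thresholds on $\gamma_{i_k}^k$ arise in the paper not from (\ref{eq12}) but from substituting $y_{i_k}^{k+1}-y_{i_k}^k=\rho(x_{i_k}^{k+1}-x_{i_k}^k)-\rho N(z^{k+1}-z^k)$ into the $\|y^{k+1}-y^k\|^2$ term and applying the triangle inequality; the $2(\rho+2)N$ factor comes from the $\rho N(z^{k+1}-z^k)$ piece, and $(2\rho^2+4\rho+1)/(\rho-L)$ from the $\rho(x^{k+1}-x^k)$ piece.

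For conclusion (2) your argument has a genuine gap. Completing the square gives
\[
\mathcal{L}_\rho\;\geq\; F(\bm{x}^k)-\tfrac{1}{2\rho}\sum_i\|\nabla f_i(x_i^k)\|^2,
\]
and you claim the excess is absorbed by $L$-smoothness and $\rho>L$. But the descent lemma only yields $f_i(x_i)-\tfrac{1}{2L}\|\nabla f_i(x_i)\|^2\geq\inf f_i$, and Assumption~2 bounds only the \emph{sum} $F$ from below, not each $f_i$; hence $\sum_i\inf f_i$ may be $-\infty$, and even when finite it is in general strictly smaller than $F(x^*)$. The paper circumvents this by not completing the square. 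Instead it applies Lemma~1 in the form $f_i(x_i)+\langle\nabla f_i(x_i),z-x_i\rangle\geq f_i(z)-\tfrac{L}{2}\|z-x_i\|^2$; since $y_i=\nabla f_i(x_i)$, the linear term $\langle y_i,z-x_i\rangle$ in $\mathcal{L}_\rho$ cancels exactly, leaving
\[
\mathcal{L}_\rho(\bm{x}^{k+1},\bm{y}^{k+1},z^{k+1})\;\geq\;\sum_i f_i(z^{k+1})+\tfrac{\rho-L}{2}\sum_i\|z^{k+1}-x_i^{k+1}\|^2\;\geq\;F(x^*).
\]
The essential trick is to lower-bound by $F$ evaluated at the \emph{common} point $z^{k+1}$ rather than at the decoupled points $x_i^k$.
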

\begin{proof}
	See Appendix A.
\end{proof}

Based on the analysis of Lemma \ref{lemma2} for PI-ADMM1, the convergence property is summarized in the following theorem.

\begin{theorem}\label{theorem111}
	Following Assumptions 1 and 2, and the conditions given in Lemma \ref{lemma2}, the iteration $(\bm{x}^k,\bm{y}^k,z^k)$ generated by PI-ADMM1 have limit points, which satisfy the KKT conditions of problem (\ref{eq2}).
\end{theorem}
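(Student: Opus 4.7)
\bigskip
\noindent\textbf{Proof proposal for Theorem \ref{theorem111}.}

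The plan is to use Lemma \ref{lemma2} as the workhorse and then extract a KKT point by a standard subsequence argument. First, from part~1) of Lemma \ref{lemma2} the sequence $\{\mathcal{L}_\rho(\bm{x}^k,\bm{y}^k,z^k)\}$ is nonincreasing, and from part~2) it is lower bounded by $F(x^\star)$ and convergent. Hence the one-step decrease $\mathcal{L}_\rho(\bm{x}^k,\bm{y}^k,z^k)-\mathcal{L}_\rho(\bm{x}^{k+1},\bm{y}^{k+1},z^{k+1})\to 0$. Inspecting the decrease inequality established in Appendix~A (which, under the parameter condition $\gamma_{i_k}^k>(2\rho^2+4\rho+1)/(\rho-L)$ and $\gamma_{i_k}^k>2(\rho+2)N$, dominates $\|x_{i_k}^{k+1}-x_{i_k}^k\|^2$, $\|y_{i_k}^{k+1}-y_{i_k}^k\|^2$ and $\|z^{k+1}-z^k\|^2$ with strictly positive constants), I would then deduce
\begin{equation}
\|\bm{x}^{k+1}-\bm{x}^k\|\to 0,\qquad \|\bm{y}^{k+1}-\bm{y}^k\|\to 0,\qquad \|z^{k+1}-z^k\|\to 0.
\end{equation}

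Next, I would argue that $\{(\bm{x}^k,\bm{y}^k,z^k)\}$ is bounded, so that it admits limit points. Boundedness of $\bm{x}^k$ follows from coercivity of $F$ in Assumption~2 combined with the upper bound on $\mathcal{L}_\rho(\bm{x}^k,\bm{y}^k,z^k)$; boundedness of $\bm{y}^k$ is obtained from the optimality condition of the primal subproblem~(\ref{eq4a11}), which yields $y_i^{k+1}=\nabla f_i(x_i^{k+1})$ for $i=i_k$, and unchanged otherwise, together with $L$-Lipschitz differentiability of $f_i$. Finally, boundedness of $z^k$ follows from the invariant
\begin{equation}
Nz^k=\sum_{i=1}^{N}\left(x_i^k-\frac{y_i^k}{\rho}\right),
\end{equation}
which is preserved by the incremental update~(\ref{eq6}) (a one-line check, since the random initialization in Algorithm~2 enforces it at $k=0$). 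Let $(\bm{x}^\star,\bm{y}^\star,z^\star)$ be a limit point along a subsequence.

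To verify the KKT conditions of~(\ref{eq2}), I would pass to the limit in three relations. (i) Primal feasibility $z^\star=x_i^\star$ for every $i$: from the dual update~(\ref{eq4c1}), $y_i^{k+1}-y_i^k=\tilde\rho_i^k(z^k-x_i^{k+1})$ for the active index; since $\tilde\rho_i^k=\rho\gamma_i^k$ is lower bounded away from zero by the condition in Lemma~\ref{lemma2} and $\|y^{k+1}-y^k\|\to 0$, the residual $z^k-x_i^{k+1}$ vanishes along the visited indices, and the Hamiltonian-cycle schedule together with $\|x^{k+1}-x^k\|\to 0$ propagates this to all $i$. (ii) Stationarity $\nabla f_i(x_i^\star)=y_i^\star$: take the limit of $\nabla f_i(x_i^{k+1})=y_i^{k+1}$, using continuity of $\nabla f_i$ from Assumption~2. (iii) Dual consistency $\sum_i y_i^\star=0$: pass to the limit in the invariant above together with $z^\star=x_i^\star$.

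The main obstacle I anticipate is step~(i), because the perturbation factor $\gamma_{i_k}^k$ is a random, possibly unbounded multiplier, so controlling $z^k-x_i^{k+1}=(y_i^{k+1}-y_i^k)/\tilde\rho_i^k$ requires the condition that $\gamma_{i_k}^k$ stays bounded above as well (not merely below, which is what Lemma~\ref{lemma2} explicitly states). I would therefore make explicit the implicit regularity that $\gamma_{i_k}^k$ is drawn from a distribution $\mathbb P_{i_k}$ supported on a bounded interval, so that $\tilde\rho_i^k$ remains inside a compact subset of $(0,\infty)$, and verify that the descent inequality from Appendix~A still yields the required summability. With this caveat, the three limit identities together are exactly the KKT conditions of~(\ref{eq2}), completing the proof.
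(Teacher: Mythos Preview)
Your outline follows essentially the same route as the paper's proof: sum the one-step descent from Lemma~\ref{lemma2} to obtain $\|x_{i_k}^{k+1}-x_{i_k}^k\|\to 0$ and $\|z^{k+1}-z^k\|\to 0$, deduce $\|y_{i_k}^{k+1}-y_{i_k}^k\|\to 0$, and then pass to the limit in the optimality relation $\nabla f_i(x_i^{k+1})=y_i^{k+1}$, the invariant $Nz^k=\sum_i(x_i^k-y_i^k/\rho)$, and the dual update to recover the three KKT conditions. Your discussion of boundedness is in fact more explicit than the paper's.

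Two points need correcting. First, the descent inequality in Appendix~A does \emph{not} dominate $\|y_{i_k}^{k+1}-y_{i_k}^k\|^2$ with a positive constant: that term enters with the negative coefficient $-\frac{1}{\gamma_{i_k}^k}(\frac{1}{\rho}+\frac{1}{2})$, and the final form of the descent bound involves only $\|x_{i_k}^{k+1}-x_{i_k}^k\|^2$ and $\|z^{k+1}-z^k\|^2$. The paper obtains $\|y_{i_k}^{k+1}-y_{i_k}^k\|\to 0$ afterwards, via the algebraic identity $y_{i_k}^{k+1}-y_{i_k}^k=\rho(x_{i_k}^{k+1}-x_{i_k}^k)-\rho N(z^{k+1}-z^k)$ coming from the incremental $z$-update~(\ref{eq6}).

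Second, and more importantly, the ``main obstacle'' you anticipate is not an obstacle, and the extra bounded-support assumption you propose is unnecessary. You have $z^k-x_{i_k}^{k+1}=(y_{i_k}^{k+1}-y_{i_k}^k)/\tilde\rho_{i_k}^k$. The condition in Lemma~\ref{lemma2} gives a \emph{lower} bound on $\gamma_{i_k}^k$, hence a lower bound on $\tilde\rho_{i_k}^k=\rho\gamma_{i_k}^k$, hence an \emph{upper} bound on $1/\tilde\rho_{i_k}^k$. That is exactly the direction needed: once $\|y_{i_k}^{k+1}-y_{i_k}^k\|\to 0$, the quotient goes to zero regardless of how large $\gamma_{i_k}^k$ may be. The paper uses precisely this, combined with $\|z^{k+1}-z^{k-j}\|\to 0$ over a window of length $N$ (from the Hamiltonian-cycle schedule), to conclude $\|z^{k+1}-x_i^{k+1}\|\to 0$ for every $i$. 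So drop the caveat; the lower bound alone suffices.
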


\begin{proof}
	See Appendix B.
\end{proof}

It should be noted that the conditions given in Lemma \ref{lemma2} and Theorem \ref{theorem111} is sufficient for the convergence of PI-ADMM1.  

\subsection{Privacy Analysis}

In what follows, we will analyze the privacy property for Algorithm 2 and PI-ADMM1.

\begin{remark}\label{theorem21}
	In Algorithm 2, the states, multipliers and gradients, i.e. $\{x_{i_k}^{k+1}, y_{i_k}^{k+1}, \nabla f_{i_k}(x_{i_k}^{k+1})| i\in\mathcal{N},k=0,..., K  \}$, cannot be inferred by the eavesdropper exactly if $z^{k+1}\neq x_{i_k}^{k+1}$ for all $k=0,...K$.
\end{remark}
\begin{proof}
	Assume that the eavesdropper collects information from $K$ iterations to infer the information of agents. According to (\ref{eq12}), the measurements considering all the agents can be formulated as  
	\begin{equation}\label{eq1712}
	\left\{  
	\begin{aligned}
	& x_{i }^0 - \frac{y_{i }^0}{\rho} =\bm{0},~i\in\mathcal{V};\\
	&x_{i_0}^{ 1} =  \frac{1}{2}\left(  N\Delta^1+ z^{0} + x_{i_0}^{0}\right);\\
	&y_{i_0}^{ 1} = y_{i_0}^0 + \frac{\rho}{2}\left(  z^{0} - N\Delta^1  -x_{i_0}^{0}  \right) ;\\
	&\qquad\qquad\qquad\vdots\\
	&x_{i_K}^{K+1} = \frac{1}{2}\left( N\Delta^{K+1} + z^{K } + x_{i_K}^{K}\right) ;\\
	&y_{i_K}^{K+1} = y_{i_K}^{K} + \frac{\rho}{2}\left(   z^{K } - N\Delta^{K+1}  -x_{i_K}^{K}  \right).
	\end{aligned}
	\right.
	\end{equation}
	In (\ref{eq1712}), $\{ z^{k}| k =0,...,K  \}$ are known to the eavesdropper, while $\{x_{i_k}^{k},y_{i_k}^{k},x_{i_k}^{k+1},y_{i_k}^{k+1}|k=0,...,K\}$ are unknown variables to the eavesdropper. Thus, (\ref{eq1712}) consists of $2K+3 $ equations and $2K+2N+2$ unknown variables. Since $N>2$, the eavesdropper cannot solve (\ref{eq1712}) to infer the exact values of $\{x_{i }^{k},y_{i }^{k} |i\in\mathcal{V},k=0, ...,K\}$. Hence with (\ref{eq11}), the gradients $\{\nabla f_i(x_{i }^{k+1})|i\in\mathcal{V},k = 0,...,K\}$ cannot be inferred exactly by the eavesdropper either. 
\end{proof}

The consensus happens when Algorithm 2 converges, where $z$ coincides with local variables $\{x_i\}$. Hence the eavesdropper can use $z$ to infer $\{x_i\}$. However, with finite iterations, the gap between token $z$ and states $\{x_i\}$ can only be guaranteed within a threshold $\epsilon$\footnote[1]{$\epsilon$ can be a predetermined stopping criterion for Algorithm 2.}. Denote $\hat{x}_i$ as the measurement of the eavesdropper regarding to $x_i$. According to (\ref{eq12}), once $x_{i_K}^{K+1}$ is measured, all the states $\{x_{i_K}^{k}|k=0,...,K\}$ of agent $i_K$ can also be estimated sequentially. 
\begin{remark}
	Assume that the eavesdropper uses $z^{K+1}$ to infer $x_{i_K}^{K+1}$ of agent $i_K=K\mod N+1$. If $ \|z^{K+1} -x_{i_K}^{K+1} \|< \epsilon $, there exists an error bound for measurement (\ref{eq1712}) as
	\begin{align}\label{eq191}
	\left\|\hat{x}_{i_K}^{k} - x_{i_K}^{k} \right\| <2^{n}\epsilon ,  ~	\left\|\hat{y}_{i_K}^{k} - y_{i_K}^{k} \right\| < \rho \left(2^{\lfloor \frac{K}{N}\rfloor+1}-2^n \right)\epsilon,
	\end{align}	 
	where $k \in[K-nN+1,K-(n-1)N],1\leq n\leq \lfloor \frac{K}{N}\rfloor$.
\end{remark}
\begin{proof}
	See Appendix C.
\end{proof}

\begin{remark}
	In Algorithm 2, the primal and dual variables, and gradients, i.e., $\{x_i^k,y_i^k, \nabla f_i(x_i^k) | i\in\mathcal{V},k=0,...,K\}$, can be inferred by the eavesdropper asymptoticly, i.e., $K\to \infty$.
\end{remark}
\begin{proof}
	According to \cite{WADMM}, it is guaranteed that $\lim\nolimits_{K\to \infty}z^{K}=x_i^{K},\forall i\in\mathcal{V}$. Furthermore, with equation (\ref{eq12}) and condition $x_i^0 - \frac{y_i^0}{\rho}=\bm{0}$, the values $\{x_i^k,y_i^k|k=0,...,K  \}$ can be derived recursively. Due to (\ref{eq11}), the gradients $\{\nabla f_i(x_i^k) |k=0,...,K  \}$ is revealed.
\end{proof}

As for PI-ADMM1, due to introduced randomization on the update of dual variable, it can guarantee augmented privacy preservation.

\begin{theorem}\label{theorem2}
	In PI-ADMM1, the exact states $\{x_i^{k}| i\in\mathcal{V},k=0,1, ... \}$ cannot be inferred by the eavesdropper unless $z^{k+1}=x_{i_k}^{k+1}$ holds. The multipliers and gradients $\{ y_i^{k}, \nabla f_i(x_i^{k})|i\in\mathcal{N},k=0,1,...  \}$ cannot be exactly inferred either.
\end{theorem}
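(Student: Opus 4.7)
The plan is to extend the equation-counting argument of Remark 3 to the perturbed setting of PI-ADMM1. The eavesdropper still observes $\{z^k\}_{k\geq 0}$ (and hence $\{\Delta^{k+1}\}$) and knows the updates (\ref{eq4a11}), (\ref{eq4c1}) and (\ref{eq6}), but now each round introduces a fresh private scalar $\tilde{\rho}_{i_k}^{k}=\rho\gamma_{i_k}^{k}$ as an extra unknown. Substituting the $y$-update (\ref{eq4c1}) into the incremental token update (\ref{eq6}) and eliminating $y_{i_k}^{k+1}$ yields the single scalar relation
\[
N\Delta^{k+1} \;=\; \bigl(x_{i_k}^{k+1}-x_{i_k}^{k}\bigr) \;+\; \frac{\tilde{\rho}_{i_k}^{k}}{\rho}\bigl(x_{i_k}^{k+1}-z^{k}\bigr),
\]
which is the only constraint the eavesdropper can extract from observations at round $k$.

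Unlike Remark 3, where this equation contained only $x_{i_k}^{k+1}$ as a genuine unknown and yet was already rendered unsolvable by the randomized initialization, here it involves both $x_{i_k}^{k+1}$ and $\tilde{\rho}_{i_k}^{k}$. After $K+1$ rounds the eavesdropper therefore accumulates $K+1$ scalar equations against $2(K+1)$ unknowns $\{x_{i_k}^{k+1},\tilde{\rho}_{i_k}^{k}\}$ on top of the $2N$ initial unknowns inherited from Algorithm 2. Any candidate $\hat{\rho}_{i_k}^{k}$ on the resulting affine line fixes a matching $\hat{x}_{i_k}^{k+1}$ that is observationally indistinguishable from the true pair, so exact inference of $x_{i_k}^{k+1}$ is impossible; the sole loophole is the degenerate coincidence $z^{k+1}=x_{i_k}^{k+1}$, in which case the state is simply read off the channel and the equation is vacuous. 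I would also note that the coefficient $x_{i_k}^{k+1}-z^{k}$ vanishes only on a measure-zero set under the continuous distribution $\mathbbm{P}_{i_k}$ of $\gamma_{i_k}^{k}$, so with probability one the above one-parameter family of consistent solutions truly exists.

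The claim about $\{y_i^{k}\}$ and $\{\nabla f_i(x_i^{k})\}$ then follows in two short steps. First, the optimality condition for (\ref{eq4a11}),
\[
\nabla f_{i_k}(x_{i_k}^{k+1}) + \tilde{\rho}_{i_k}^{k}\bigl(x_{i_k}^{k+1}-z^{k}\bigr) - y_{i_k}^{k}=0,
\]
combined with (\ref{eq4c1}) gives $\nabla f_{i_k}(x_{i_k}^{k+1})=y_{i_k}^{k+1}$, so recovering the gradient is equivalent to recovering the multiplier. Second, since (\ref{eq4c1}) expresses $y_{i_k}^{k+1}$ in terms of the already ambiguous pair $(x_{i_k}^{k+1},\tilde{\rho}_{i_k}^{k})$ together with $y_{i_k}^{k}$, a backward induction traces the uncertainty all the way to the random initial $y_{i_k}^{0}$ hidden by Algorithm 2, so neither the multipliers nor the gradients can be pinned down exactly. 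The main obstacle I anticipate is showing rigorously that the one-parameter family of consistent pairs $(\hat{x}_{i_k}^{k+1},\hat{\rho}_{i_k}^{k})$ indeed induces genuinely distinct $\hat{y}_{i_k}^{k+1}$ — that is, that the non-injectivity on the $x$-channel does not accidentally collapse onto a single $y$ value — and cleanly excluding the degenerate instants at which $x_{i_k}^{k+1}=z^{k}$ so that the preceding one-equation-in-two-unknowns argument remains the generic picture.
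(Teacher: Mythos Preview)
Your proposal is correct and follows essentially the same equation-counting strategy as the paper: the paper writes out the full system (\ref{eq17}) of $2K+N+2$ relations in $3K+2N+3$ unknowns and observes it is under-determined, whereas you first eliminate the $y$-variables to obtain one relation per round and argue locally that each round admits a one-parameter family of consistent pairs $(x_{i_k}^{k+1},\tilde\rho_{i_k}^{k})$, then propagate the ambiguity to $y$ and the gradients via $\nabla f_{i_k}(x_{i_k}^{k+1})=y_{i_k}^{k+1}$. The only cosmetic slips are that the argument you cite is Remark~2 (not Remark~3), and after eliminating the $y$'s only $N$ (not $2N$) independent initial unknowns remain; neither affects your conclusion.
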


\begin{proof}
 	 Due to the randomized initialization, the values of $x^0_i$ and $y^0_i$ cannot be revealed by the eavesdropper. Assume that the eavesdropper collects information from $K$ iterations to infer the information of agents. The measurements considering all the agents can be formulated as  
\begin{equation}\label{eq17}
\left\{  
\begin{aligned}
& x_i^0 -\frac{y_i^0}{\rho} = \bm{0},~i\in\mathcal{V};\\
&x_{i_0}^{ 1} = \frac{1}{1 + \gamma_{i_0}^0}\left(   N\Delta^1+ \gamma_{i_0}^0z^{0} + x_{i_0}^{0}\right);\\
&y_{i_0}^{ 1} = y_{i_0}^{0} + \frac{\rho\gamma_{i_0}^0 }{1+ \gamma_{i_0}^0}\left(   z^{0} - N\Delta^1  -x_{i_0}^{0} \right) ;\\
&\qquad\qquad\qquad\vdots\\
&x_{i_K}^{K+1} = \frac{1}{1 + \gamma_{i_K}^K}\left(  N\Delta^{K+1} +\gamma_{i_K}^K z^{K } + x_{i_K}^{K}\right);\\
&y_{i_K}^{K+1} = y_{i_K}^{K} +  \frac{\rho\gamma_{i_K}^K }{1+ \gamma_{i_K}^K}\left(  z^{K } - N\Delta^{K+1}  -x_{i_K}^{K}  \right) .
\end{aligned}
\right.
\end{equation}
Different form (\ref{eq1712}), in the above equations, the tokens $\{ z^{k+1}|  k =0,...,K +1\}$ are known to the eavesdropper, while $\{x_{i_k}^{k},y_{i_k}^{k}|k=0,...,K+1\}$ and $\{ \gamma_{i_k}^k|k=0,...,K \}$ are unknown variables. Thus, (\ref{eq17}) consists of $2K+N+2$ equations and $3K+2N+3$ unknown variables. Thus the eavesdropper cannot solve (\ref{eq17}) to infer the exact values of $\{x_{i }^{k},y_{i }^{k} |i\in\mathcal{V},k=0, ...,K\}$ since $K+N+1>0$.
 When $K\to\infty$, the eavesdropper can have another piece of information according to the KKT conditions (\ref{eq266}). However, due to $K\gg N$, the under-determined system (\ref{eq17}) still cannot be solved asymptotically.
\end{proof}


 \begin{corollary}\label{corollary2}
 	In PI-ADMM1, the local functions $\{f_i|i\in\mathcal{V} \}$ cannot be inferred by the eavesdropper. 
 \end{corollary}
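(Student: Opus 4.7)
The plan is to show that exact reconstruction of any $f_i$ would require the eavesdropper to obtain at least one exact gradient--input pair $(x_i^{k+1},\nabla f_i(x_i^{k+1}))$, and then to invoke Theorem \ref{theorem2} to rule this out. The approach parallels the remark following the proof of Lemma 1, where inferring $f_i$ was tied to recovering its gradients at the iterates.

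First, I would establish the PI-ADMM1 analogue of (\ref{eq11}). Writing the first-order optimality condition for the perturbed primal subproblem (\ref{eq4a11}) at agent $i_k$ yields
\[
\nabla f_{i_k}(x_{i_k}^{k+1}) = \tilde{\rho}_{i_k}^{k}\bigl(z^{k}-x_{i_k}^{k+1}\bigr) + y_{i_k}^{k},
\]
which, combined with the perturbed dual update (\ref{eq4c1}), collapses to $\nabla f_{i_k}(x_{i_k}^{k+1}) = y_{i_k}^{k+1}$. Hence learning any gradient value at an iterate is equivalent to jointly learning both the state $x_{i_k}^{k+1}$ and the multiplier $y_{i_k}^{k+1}$.

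Second, by Theorem \ref{theorem2} the under-determined system (\ref{eq17}) leaves the pairs $\{(x_{i_k}^{k+1},y_{i_k}^{k+1})\}_{k\ge 0}$ ambiguous: neither the input at which a gradient would be evaluated nor the gradient value itself can be pinned down exactly, since the unknowns include the random initialization $\upsilon$ and the sequence of perturbations $\{\gamma_{i_k}^{k}\}$. Consequently the eavesdropper cannot produce a single exact sample $(x,\nabla f_i(x))$ of the local function, and the same ambiguity persists asymptotically because each new iteration introduces one new perturbation unknown per equation.

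Finally, I would conclude by Definition \ref{definition}1 that, absent any exact gradient--input sample, $f_i$ cannot be uniquely identified: for any candidate $\hat{f}_i$ consistent with the eavesdropper's observations, one can exhibit a distinct $L$-Lipschitz differentiable $\tilde{f}_i$ that coincides with $\hat{f}_i$ outside a small neighborhood of an ambiguous iterate and differs on that neighborhood, making $\tilde{f}_i$ equally consistent with the observations. The main obstacle I anticipate is formalizing this last non-uniqueness step without assuming more about the eavesdropper's prior knowledge of the function class than Assumption 2 provides; the remainder of the argument is a short chain of implications from Theorem \ref{theorem2}.
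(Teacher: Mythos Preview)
Your proposal is correct and follows essentially the same logical skeleton as the paper: invoke Theorem~\ref{theorem2} to conclude that the eavesdropper cannot recover the gradient--input pairs, and then argue that without such pairs the local functions $\{f_i\}$ cannot be reconstructed. Your derivation of the identity $\nabla f_{i_k}(x_{i_k}^{k+1})=y_{i_k}^{k+1}$ is exactly what the paper records as (\ref{eq131}) in Appendix~A, so that step is fine.

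The one substantive difference is in the final implication. You try to build the ``no gradient samples $\Rightarrow$ no function recovery'' step from scratch via Definition~1, constructing an alternative $\tilde f_i$ that agrees with a candidate $\hat f_i$ outside a small neighborhood; you rightly flag this as the delicate part. The paper sidesteps this entirely: its proof is two sentences, the second of which simply appeals to an external reference (the privacy-preserving ADMM work of Zhang and Zhu) for the claim that without the sequence of gradients the local objective cannot be inferred. So your approach is more self-contained but also more work, whereas the paper's is terser but relies on a black-box citation. If you want to match the paper, replace your last paragraph with that citation; if you prefer your route, be aware that making the perturbation argument rigorous under only Assumption~2 is indeed nontrivial, and the paper does not attempt it.
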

\begin{proof}
	According to Theorem \ref{theorem2}, the states and corresponding gradients are unknown to the eavesdropper. Hence from \cite{zhang}, the local objective functions $\{f_i \}$ of agents cannot be inferred by the eavesdropper.
\end{proof}

From above analysis, we can conclude that the proposed PI-ADMM1 is privacy-preserving for the local functions and datasets of agents against the external eavesdropper. In addition, the privacy for agent $i$ can also be guaranteed against other agents.
\begin{theorem}
	In PI-ADMM1, the states, multipliers and gradients of agent $i(\in\mathcal{V})$, i.e., $\{x_i^k, y_i^k,\nabla f_i(x_i^k)|k=0,1,... \}$, cannot be inferred by other $N-1$ colluding agents.
\end{theorem}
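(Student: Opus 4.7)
The plan is to adapt the equation-counting argument from the proof of Theorem~\ref{theorem2} to the colluding-agent threat model. The $N-1$ adversaries $\mathcal{V}\setminus\{i\}$ now possess, beyond all public information, their own iterates $\{x_j^k,y_j^k\}_{j\neq i}$ and their own step-size perturbations $\{\gamma_j^k\}_{j\neq i}$, in addition to observing the full token sequence $\{z^k\}$. The target is to exhibit an under-determined system whose unknowns are agent $i$'s private quantities, thereby precluding unique recovery of $\{x_i^k, y_i^k, \nabla f_i(x_i^k)\}$.

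First I would localize the leakage. In any iteration where an agent $j\neq i$ is activated, the token increment $z^{k+1}-z^k$ is fully reproduced by the colluding agents from their own known $(x_j^k, x_j^{k+1}, y_j^k, y_j^{k+1})$ via the public rule~(\ref{eq6}); consequently such iterations contribute no equation involving agent $i$. Only the $M$ rounds with $i_k=i$ are informative. For each such activation I would write the two equations analogous to~(\ref{eq17}): the $z$-update (using the public $\rho$) and the $y$-update (using the private $\tilde{\rho}_i^k=\rho\gamma_i^k$). Each activation introduces three fresh unknowns $(x_i^{k+1}, y_i^{k+1}, \gamma_i^k)$. Together with the single randomized-initialization constraint $x_i^0-y_i^0/\rho=\bm{0}$ (one equation, two unknowns $x_i^0, y_i^0$), the overall tally becomes $1+2M$ equations against $2+3M$ unknowns, leaving $1+M$ free parameters for every $M\geq 0$. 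Hence the system admits infinitely many solutions, so $(x_i^k, y_i^k)$ cannot be uniquely identified; the stationarity identity $\nabla f_i(x_i^{k+1})=y_i^{k+1}$ from~(\ref{eq11}) then propagates this non-identifiability to the gradient sequence, and through the argument of Corollary~\ref{corollary2} to the local function $f_i$ itself.

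The delicate step will be ruling out collapse of the under-determination in the asymptotic regime $K\to\infty$. Here I would invoke Theorem~\ref{theorem111}, under which $\lim_{k\to\infty} z^k = x_i^\infty$ holds at consensus; this yields only a finite set of limiting constraints, while the hidden perturbations $\gamma_i^k$ keep accruing at a rate of one per activation. I would argue, as in the proof of Theorem~\ref{theorem2}, that the surplus $1+M$ degrees of freedom are preserved in the limit, so the colluding agents remain unable to recover the exact trajectory of agent $i$'s primal, dual, and gradient iterates even with unbounded observation horizon.
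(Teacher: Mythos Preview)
Your proposal is correct and follows essentially the same route as the paper: restrict attention to the iterations with $i_k=i$, write one initialization constraint plus two recursive relations per activation, count the three fresh unknowns $(x_i^{k+1},y_i^{k+1},\gamma_i^k)$ per activation to obtain a surplus of $M+1$ degrees of freedom, and then note that the finitely many KKT constraints arising at convergence cannot close this gap. The paper organizes the same count with $M=\lfloor K/N\rfloor+1$ and writes the two relations per activation in the rearranged recursive form (\ref{eq18}) rather than as the raw $z$- and $y$-updates, but the argument is identical.
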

\begin{proof}
 Without loss of generality, we consider the privacy for agent $i_0$ against other agents in set $\mathcal{V}\backslash i_0$, which contains all agents except $i_0$. Recalling the measurements (\ref{eq17}), useful information for deducing the primal and dual variables, and gradients is given by
  \begin{equation}\label{eq18}
 \left\{  
 \begin{aligned}
 & x_{i_0}^0 -\frac{y_{i_0}^0}{\rho} = \bm{0};\\
 &x_{i_0}^{ 1+nN} = \frac{1}{1 + \gamma_{i_0}^{nN}}\left(   N\Delta^{1+nN}+ \gamma_{i_0}^{nN}z^{nN} + x_{i_0}^{nN}\right);\\
 &y_{i_0 }^{ 1+nN} = y_{i_0}^{nN} + \frac{\rho\gamma_{i_0 }^{nN} }{1+ \gamma_{i_0 }^{nN}}\left(   z^{nN} - N\Delta^{1+nN}  -x_{i_0 }^{nN} \right),
 \end{aligned}
 \right.
 \end{equation}
 where $n$ is from $0$ to $ \lfloor \frac{K}{N} \rfloor$. Since (\ref{eq18}) contains $2\lfloor \frac{K}{N} \rfloor + 3$ equations and $3\lfloor \frac{K}{N} \rfloor + 5$ unknown variables, the remaining agents cannot obtain the exact values of $\{x_{i_0}^k,y_{i_0}^k,\nabla f_{i_0}(x_{i_0}^k)|k=0,...,K   \}$ by solving this under-determined system. With $K\to \infty$, the KKT conditions reduce to
 \begin{equation} \label{eq1911}	
\nabla f_{i_0}(x_{i_0}^{K+1}) = y_{i_0}^{K+1};~y_{i_0}^{K+1} = \sum_{j\in\mathcal{V}\backslash i_0 }y_{j}^{K+1};~ z^{K+1} = x_{i_0}^{K+1}. 
\end{equation} 
However, even combining (\ref{eq18}) and (\ref{eq1911}), the exact values of $\{x_{i_0}^k,y_{i_0}^k,\nabla f_{i_0}(x_{i_0}^k)|k=0,...,K   \}$ cannot be obtained asymptotically. 
\end{proof}
  
Since only the token is transmitted among agents in PI-ADMM1, the local variables and gradients $\{x_{i_0}^k,y_{i_0}^k,\nabla f_{i_0}(x_{i_0}^k) \}$ of agent $i_0$ cannot be observed by other agents directly. In fact, except $\{x_j^k,y_j^k,\nabla f_j(x_j^k)|j\in \mathcal{V}\backslash i_0  \}$, the information gathered by the eavesdropper is the same as that of colluding agents in $\mathcal{V}\backslash i_0$. This explains why the system (\ref{eq17}) reduces to (\ref{eq18}) in Theorem 3.

\section{Numerical Experiments}
In this section, we will conduct numerical experiments to evaluate the convergence and privacy properties of proposed PI-ADMM algorithms. For the simulation, we generate the connected network $\mathcal{G}$ with $N $ agents and $|\mathcal{E}|=\frac{N(N-1)}{2}\eta$ links. This ensures a Hamiltonian cycle in $\mathcal{G}$. We consider unicast among agents, and the resultant communication cost for each transmission of a $p$-dimensional vector is 1 unit. 

We evaluate the convergence of proposed approaches with state-of-the-art approaches regarding the accuracy defined by  
\begin{equation}\label{acc}
\text{accuracy}=\frac{1}{N}\sum_{i=1}^{N}\frac{\left\|x_i^k-x^*\right\|}{\left\|x_i^0-x^*\right\|},
\end{equation} 
where $x^*\in\mathbb{R}^p$ is the optimal solution of (\ref{eq2}). The dimension of $x_i$, $y_i$ and $z$ is set to be $p=2$.

   \begin{figure} [t] 
  	\vskip 0.2in
  	\begin{center}
  		\centerline{\includegraphics[width=86mm]{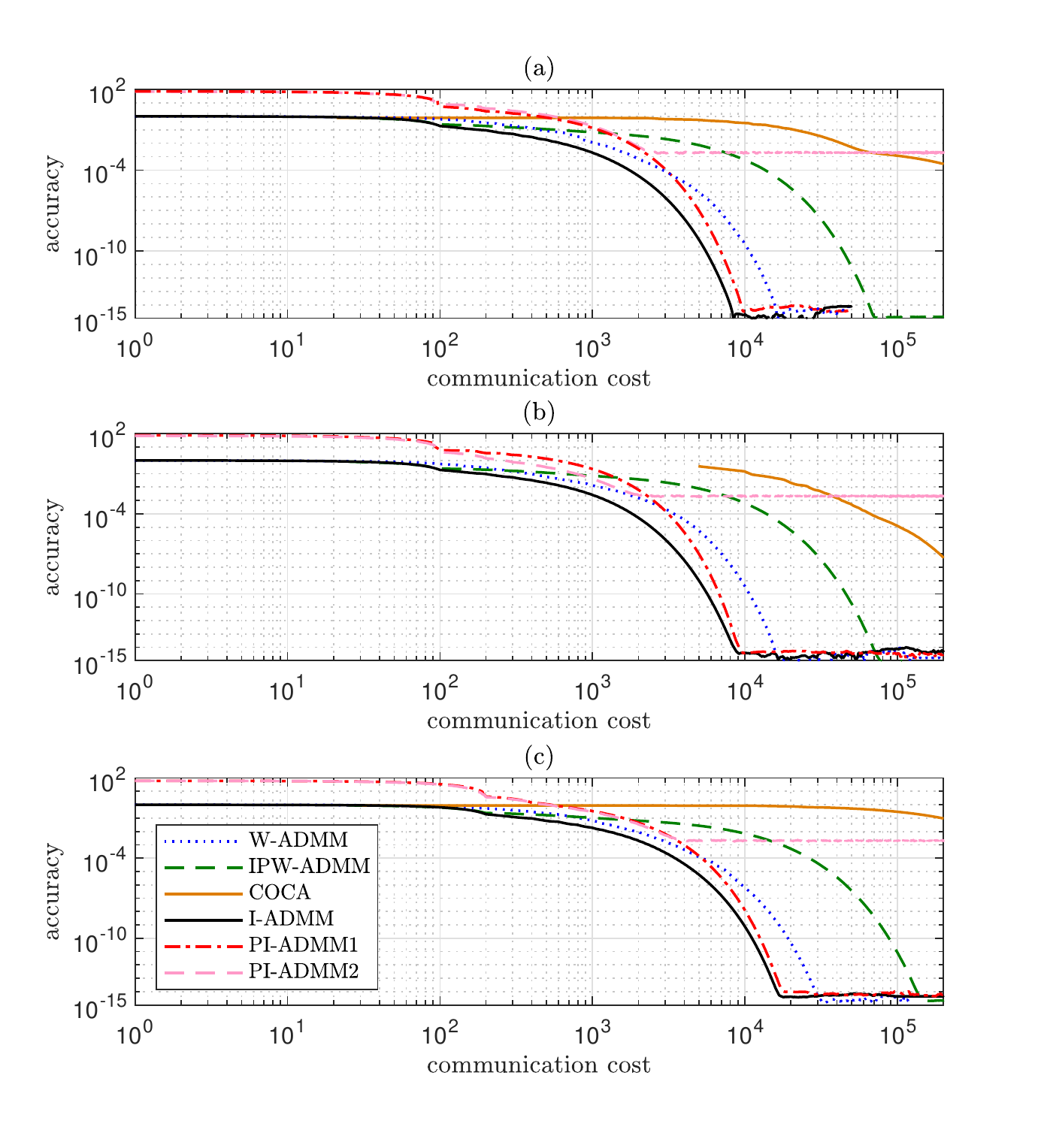}}
  		\caption{The accuracy of \textit{decentralized ridge regression} for W-ADMM ($\beta=10$), IPW-ADMM ($\rho=1,\tau=0,M=25$), COCA ($c=1,\alpha=1,\rho=0.85$), I-ADMM ($\rho=10$), PI-ADMM1 ($\rho=10$) and PI-ADMM2 ($\rho=10,\sigma=10^{-3}$) with different network settings: (a) $N=100,\eta=0.3$; (b) $N=100,\eta=0.5$; (c) $N=200,\eta=0.3$.}
  		\label{2333}
  	\end{center}
  	\vskip -0.2in
  \end{figure} 
\subsection{Decentralized Ridge Regression}   
  
We first consider the decentralized least square problem as \cite{8523816}, which aims at solving (\ref{eq1}) with a local function
\begin{equation}
	f_i(x_i;\mathcal{D}_i) = \frac{1}{b_i}\sum_{j=1}^{b_i} \left\| x_i^To_{i,j}-t_{i,j} \right\|^2,
\end{equation}
where $\mathcal{D}_i=\{o_{i,j},t_{i,j}|j=1,...,b_i \}$ is the dataset of agent $i$ locally. The entries of input $o_{i,j}\in\mathbbm{R}^{2}$ and target $t_{i,j}\in\mathbbm{R}$ follow i.i.d. distribution $\mathcal{U}(0,1)$. The number of data samples is kept unique across agents with $b_i=30$. 
 
   \begin{figure} [t] 
	\vskip 0.2in
	\begin{center}
		\centerline{\includegraphics[width=86mm]{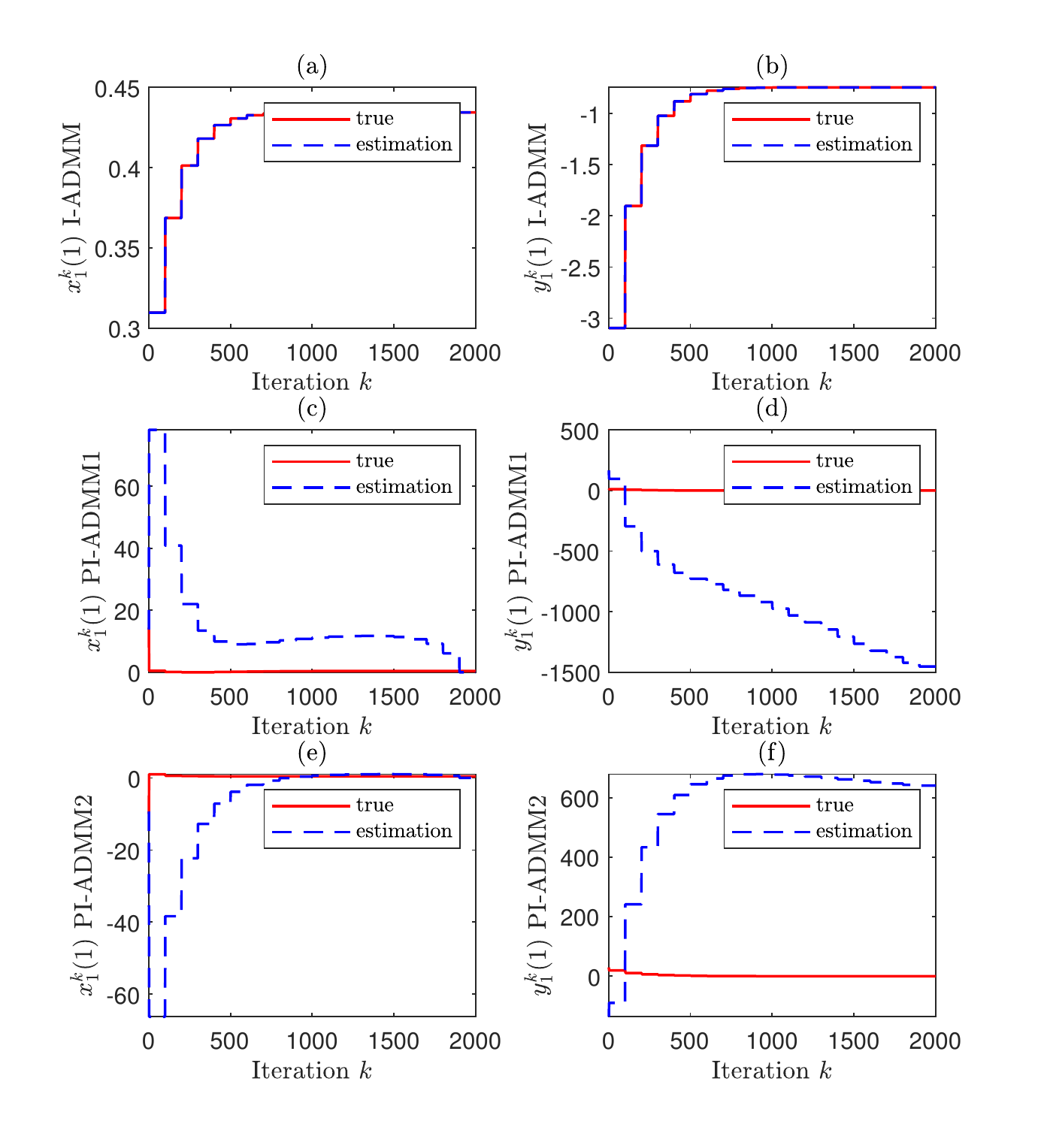}}
		\caption{The estimation and true value in \textit{decentralized ridge regression} with respect to $\{ x_{1}^k(1),y_{1}^k(1)|k=0,1,...,2000 \}$ for I-ADMM, PI-ADMM1 and PI-ADMM2. }
		\label{23}
	\end{center}
	\vskip -0.2in
\end{figure} 

The accuracy over communication cost is shown in Fig. 2. It is clear that I-ADMM is the most communication efficient compared with W-ADMM \cite{WADMM}, IPW-ADMM \cite{PWADMM} and COCA \cite{COCA} for different network settings. Equivalently, it demonstrates that I-ADMM has a faster convergence speed than W-ADMM. This is due to that both I-ADMM and W-ADMM consume the same communication load with same amount of iterations. The update order of I-ADMM follows Hamiltonian cycle, while W-ADMM is updated by a random walk. In W-ADMM, each agent is randomly visited, and thus the unbalanced updating frequency of agents degrades the convergence speed. It also can be demonstrated that I-ADMM is more communication efficient than W-ADMM. Besides, PI-ADMM1 presents different convergence behaviors compared with I-ADMM and W-ADMM. To guarantee the privacy in PI-ADMM1, we set $\{\mathbbm{P}_i=\mathcal{U}(1-\frac{1}{\rho},1+\frac{1}{\rho})|i\in\mathcal{V} \}$, and generate $\{x_{i }^0\sim \mathcal{U}(0,100)|i \in\mathcal{V} \}$, which hence makes the initialization $\{x_i^0\}$ far away from the optimal solution $x^*$. This explains the phenomenon that PI-ADMM1 converges slower than I-ADMM and W-ADMM with few iterations. However, along with the updating process, the convergence speed of PI-ADMM1 surpasses that of W-ADMM, which coincides with that of I-ADMM. This is because the artificial noise added on the step size is scaled by the primal residue $z^{k+1}-x_i^{k+1}$, which converges to $\bm{0}$ gradually. Since the additive noise in PI-ADMM2 does not scale with iterations. As shown in Fig. 4, this introduces error bounds for the accuracy, which is determined by $\sigma$. Comparing sub-figures (a) and (b), the convergence behaviors of I-ADMM, PI-ADMM1 and PI-ADMM2 do not depend on the connectivity $\eta$. This is because the convergence property of these proposed algorithms is only determined by the size of Hamiltonian cycle. This also explains that the convergence speed degrades with the expanding networks.

\begin{figure} [t] 
	\vskip 0.2in
	\begin{center}
		\centerline{\includegraphics[width=86mm]{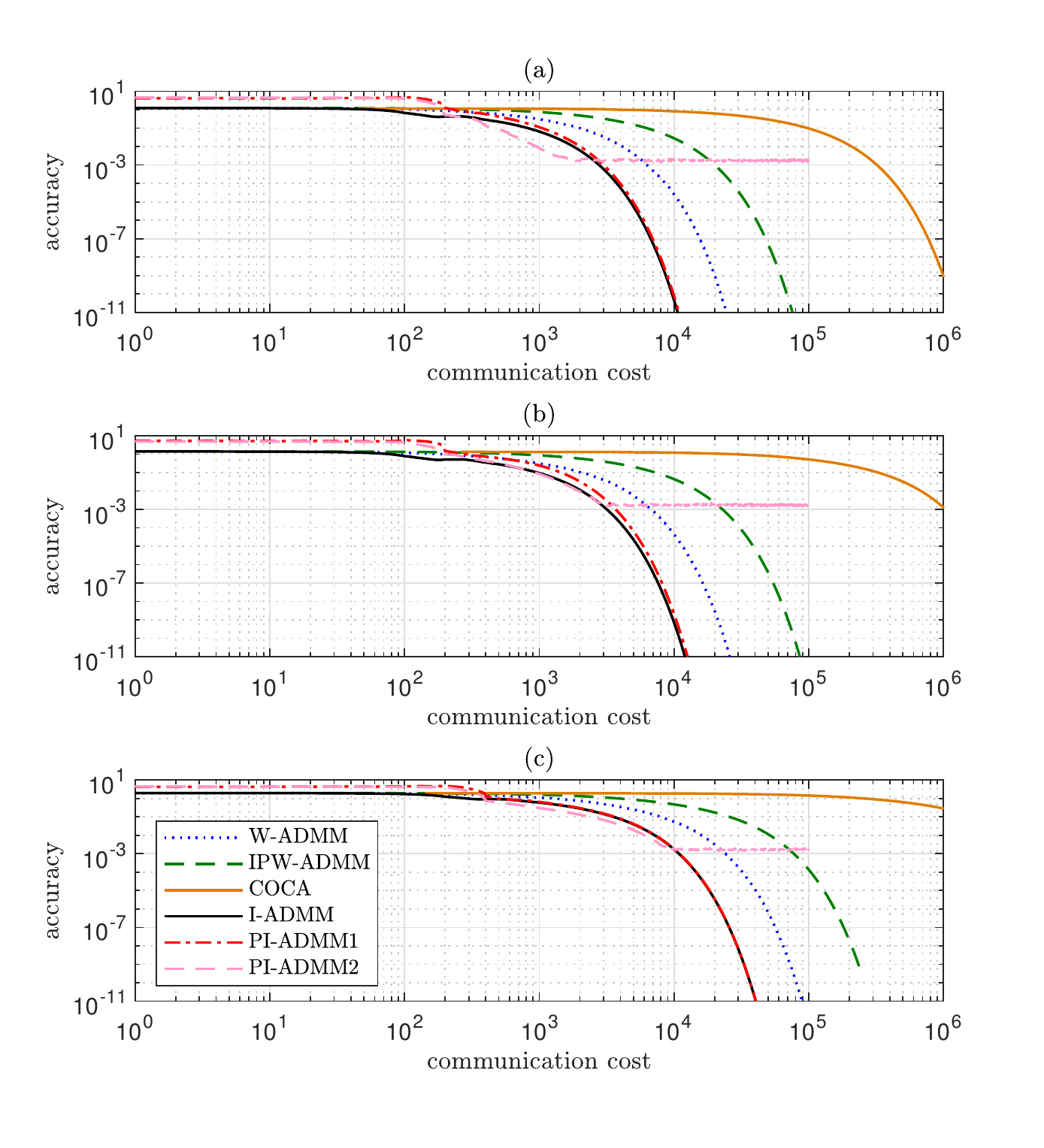}}
		\caption{The accuracy of \textit{decentralized logistic regression} for W-ADMM ($\beta=1$), IPW-ADMM ($\rho=1,\tau=3,M=25$), COCA ($c=1,\alpha=1,\rho=0.85$), I-ADMM ($\rho=1$), PI-ADMM1 ($\rho=1$) and PI-ADMM2 ($\rho=1,\sigma=10^{-3}$) with different network settings: (a) $N=100,\eta=0.3$; (b) $N=100,\eta=0.5$; (c) $N=200,\eta=0.3$. }
		\label{233}
	\end{center}
	\vskip -0.2in
\end{figure} 
 
Then we evaluate the capability of privacy preservation for proposed PI-ADMM algorithms with $N=100$ and $\eta=0.3$. Without loss of generality, we only focus on the estimation over $\{x_{1}^k(1),y_{1}^k(1)|k=0,1,...,2000\}$ with observations $\{z^k|k=0,1,...,2001\}$ for proposed methods. Besides, as for I-ADMM, we assume that the eavesdropper uses the recursive equation (\ref{eq12}) to deduce the primal and dual variable of agent $1$ with initialization $\{x_i=\bm{0},y_i=\bm{0}|\in\mathcal{V}\}$ and inferring $x_{i_{2000}}^{2001}$ with $z ^{2001}$. The results are shown in Fig. 6 (a) and (b), where the estimation coincides with the true value. The figures clearly demonstrates that I-ADMM is not privacy preserving. While for PI-ADMM1, we reformulate the under-determined equations (\ref{eq17}) into the form $A\upsilon=b$, where $\upsilon=[x_1^0(1),...,x_1^{2000}(1),y_1^0(1),...,y_1^{2000}(1)]$. $A\in\mathbbm{R}^{l\times m}$ is the coefficient matrix that also considers the KKT condition (\ref{eq266}) as $\sum_{i=1}^{N}y_i^{2000}(1)= 0$. Since $\{\gamma_{i_k}^k \}$ is unknown to the eavesdropper, we set $\gamma_{i_k}^k=1$ in the estimation without loss of generality. Moreover, we substitute $x_{i_{2000-i}}^{2001-i}$ with $z^{2001-i}$ for $i=0,1,...,N-1$ both in systems (\ref{eq1712}) and (\ref{eq17}). Since $A$ is sparse and $l>m$, we use core function \textit{lsqr}($A,b$) in matlab to obtain the solution $\upsilon^*$, which solves the least square problem
\begin{equation}
	\upsilon^* = \arg\min_{\upsilon}~\|A\upsilon-b\|^2.
\end{equation}
A similar procedure for estimation can also be established for PI-ADMM2. The estimation results for PI-ADMM1 and PI-ADMM2 are presented in Fig. 6 (c)-(f). From sub-figures (c) and (e), for both PI-ADMM1 and PI-ADMM2, the gaps between the true value and estimation for $x_{1}^k(1)$ shrinkages to $0$ as convergence happens. This is inevitable and also consistent with analysis in Remark 4. However, the estimation error for $y_{1}^k(1)$ diverges with iterations. We can see that PI-ADMM1 and PI-ADMM2 can prevent information leakage against the external eavesdropper regarding to the multipliers. 
 
\begin{figure} [t] 
	\vskip 0.2in
	\begin{center}
		\centerline{\includegraphics[width=86mm]{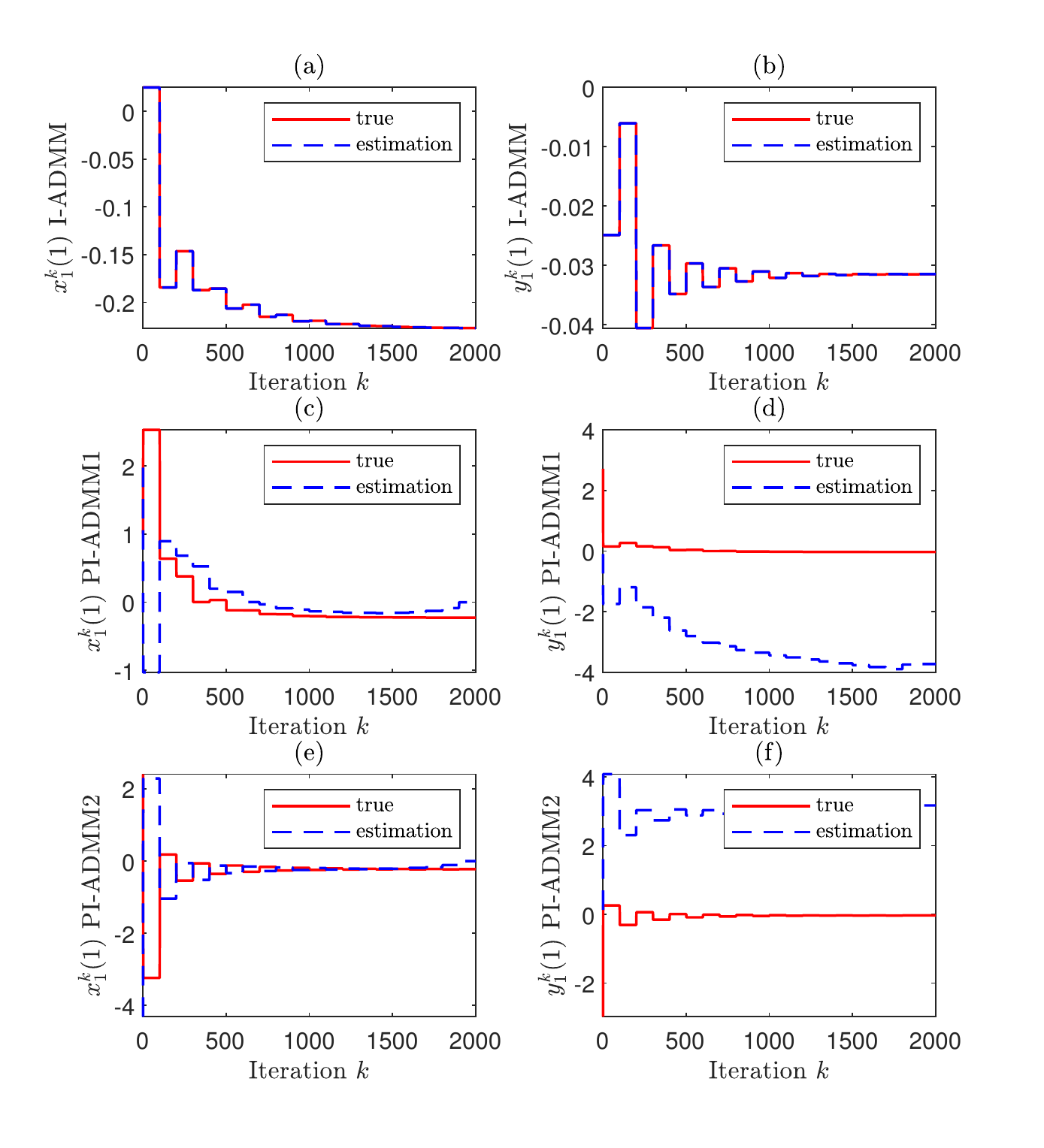}}
		\caption{The estimation and true value in \textit{decentralized logistic regression} with respect to $\{ x_{1}^k(1),y_{1}^k(1)|k=0,1,... 2000\}$ for I-ADMM, PI-ADMM1 and PI-ADMM2.}
		\label{2331}
	\end{center}
	\vskip -0.2in
\end{figure}

\subsection{Decentralized Logistic Regression}
In the decentralized logistic regression, the local loss function of agent $i$ is
\begin{equation}\label{eq22}
f_i(x_i) = \frac{1}{b_i}\sum_{j=1}^{b_i}  \log\left( 1+\exp\left(-t_{i,j}x_i^To_{i,j}\right)\right) ,
\end{equation}
where $t_{i,j}\in\{-1,1\}$ and $b_i=30$. Each sample feature $o_{i,j}$ follows $\mathcal{N}(0,I)$. To generate $t_{i,j}$, we first generate a random vector $x\in\mathbbm{R}^2\sim\mathcal{N}(0,I)$. Then for each sample, we generate $v_{i,j}$ according to $\mathcal{U}(0,1)$, and if $v_{i,j}\leq (1+\exp(-x^To_{i,j}))^{-1}$, we set $t_{i,j}$ as $1$, otherwise $-1$. Since it is difficult to solve the optimization problem (\ref{eq2}) with (\ref{eq22}) in I-ADMM based methods, we alternatively use the updating process (\ref{eq4b11}).
Fairly, we also adopt the first-order approximation for algorithms IPW-ADMM, W-ADMM and COCA.

In Fig. 5, we present the accuracy over communication costs for different network settings. Apparently, compared to W-ADMM, IPW-ADMM and COCA, the proposed I-ADMM based algorithms are the most communication-efficient. The curves with different network setups present the similar trends as those of Fig. 3. 
 
The results regarding to privacy preservation are shown in Fig. 6. For PI-ADMM1 and PI-ADMM2, we set $\{\mathbbm{P}_i=\mathcal{U}(1-\frac{0.1}{\rho},1+\frac{0.1}{\rho})|i\in\mathcal{V} \} $ and initialize $\{x_{i }^0\sim \mathcal{U}(0,10)|i \in\mathcal{V} \}$. The estimation approaches for I-ADMM, PI-ADMM1 and PI-ADMM2 follow those of Fig. 4. In Fig. 6, the sub-figures (a) and (b) demonstrate that I-ADMM cannot guarantee privacy for agents. As for PI-ADMM1 and PI-ADMM2, the states $ x_1^{k}(1) $ can be revealed with iteration $k$ increasing. However, the estimation for $y_1^{k}(1)$ diverges from the ground truth. Thus, proposed PI-ADMM algorithms are privacy preserving.

\section{Conclusions}

We investigate the I-ADMM based privacy preserving methods for solving consensus problem in decentralized networks. Different from W-ADMM, the updating order of I-ADMM follows an Hamiltonian cycle, that guarantees a faster convergence speed than W-ADMM. Since fewer iteration rounds are needed, I-ADMM is more communication efficient than W-ADMM and other state-of-the-art methods. Moreover, to prevent information leakage over the local function and private dataset against the external eavesdropper, we propose two PI-ADMM algorithms, i.e., PI-ADMM1 and PI-ADMM2, which conduct step size perturbation and primal perturbation, respectively. We prove the convergence and privacy preservation for PI-ADMM1. Numerical experiments also show that PI-ADMM2 is privacy preserving. Besides, numerical results show that PI-ADMM1 can achieve almost the same communication efficiency as I-ADMM.



 \begin{appendices}

 	\section{Proof of Lemma \ref{lemma2}}
 	From steps 2-11 of PI-ADMM1, it is easy to verify that $ \frac{1}{N}\sum_{i=1}^{N} (x_i^0 -\frac{y_i^0}{\rho} ) =\bm{0}$. Then we prove that $\mathcal{L}_{\rho}(\bm{x}^k,\bm{y}^k,z^{k })$ is non-increasing with iteration $k$. From the optimality condition of (\ref{eq4b}) and update (\ref{eq4c1}), we can derive
 	\begin{equation} 
 	\begin{aligned}
 	\nabla f_{i_k}\left(x_{i_k}^{k+1}\right)=&y_{i_k}^k + \rho\gamma_{i_k}^k \left(z^{k }-x_{i_k}^{k+1}  \right)=y_{i_k}^{k+1} . \label{eq131}
 	\end{aligned}	 	
 	\end{equation}
 	After updating to $\bm{x}^{k+1}$ and $\bm{y}^{k+1}$ by (\ref{eq4a}), (\ref{eq4c1}), we have
 	\begin{equation}
 	\begin{aligned}
 	&\mathcal{L}_{\rho}\left(\bm{x}^k,\bm{y}^k,z^{k } \right) - \mathcal{L}_{\rho}\left( \bm{x}^{k+1},\bm{y}^{k+1},z^{k } \right) \\
 	\overset{(a)}{=}&\underline{f_{i_k}\left(x_{i_k}^k\right)-f_{i_k}\left(x_{i_k}^{k+1}\right)}_{A} + \underline{\left\langle y_{i_k}^k,z^{k }-x_{i_k}^k\right \rangle }   \\
 	&\underline{-\left \langle y_{i_k}^{k+1},z^{k }-x_{i_k}^{k+1}  \right\rangle +\left\langle \rho\left(z^{k }-x_{i_k}^{k+1}\right),x_{i_k}^{k+1} -x_{i_k}^k \right\rangle}_{B}\\&+ \frac{\rho}{2} \left\| x_{i_k}^k-x_{i_k}^{k+1}  \right\|^2,\label{eq14} 
 	\end{aligned}
 	\end{equation}
 	where $(a)$ holds since the cosine identity $\|b+c\|^2 - \|a+c\|^2=\|b-a \|^2 + 2\langle a+c,b-a \rangle $. According to Lemma 1, term $A$ in (\ref{eq14}) satisfies
 	\begin{equation}
 	\begin{aligned}
 	A &\geq \left\langle \nabla f_{i_k}\left(x_{i_k}^{k }\right), x_{i_k}^k-x_{i_k}^{k+1}\right \rangle -\frac{L}{2}\left\|x_{i_k}^{k+1}-x_{i_k}^k  \right\|^2 \\
 	&=\left\langle y_{i_k}^{k}, x_{i_k}^k-x_{i_k}^{k+1} \right\rangle   -\frac{L}{2}\left\|x_{i_k}^{k+1}-x_{i_k}^k  \right\|^2 ,
 	\end{aligned}
 	\end{equation}
Then $A + B$ can be rewritten as
 	\begin{equation} \label{eq21}
 	\begin{aligned}
  &A+B \\ \overset{(a)}{=}&\left\langle y_{i_k}^k - y_{i_k}^{k+1},z^{k }-x_{i_k}^{k+1} \right\rangle  +\frac{1}{\gamma_{i_k}^k}\left\langle  y_{i_k}^{k+1}- y_{i_k}^k  ,x_{i_k}^{k+1} -x_{i_k}^k \right\rangle \\
 	\overset{(b)}{\geq}&-\frac{1}{\gamma_{i_k}^k}\left(\frac{1}{\rho } + \frac{1}{2}\right)\left\| y_{i_k}^{k+1}- y_{i_k}^{k} \right\|^2 -\left(\frac{1}{2\gamma_{i_k}^k}+\frac{L}{2}\right)\left\|x_{i_k}^{k+1}- x_{i_k}^{k} \right\|^2 ,
 	\end{aligned}
 	\end{equation}
 	where (a) is because (\ref{eq20}) and (b) are from Young's inequality.
 	 	\begin{equation}\label{eq20}
 	\rho\left(z^{k }-x_{i_k}^{k+1}\right)=\frac{1}{\gamma_{i_k}^k}\left( y_{i_k}^{k+1}-y_{i_k}^{k} \right).
 	\end{equation}
 Through updating token $z$ at iteration $k$, the change of augmented Lagrangian is measured by
 	\begin{equation}
 	\begin{aligned}
 	&\mathcal{L}_{\rho}\left(\bm{x}^{k+1},\bm{y}^{k+1},z^{k } \right) - \mathcal{L}_{\rho}\left( \bm{x}^{k+1},\bm{y}^{k+1},z^{k+1} \right) \\
 	=&\sum_{i=1}^{N}\left[ \vphantom{\frac{\rho}{2}}  \left\langle y_i^{k+1}, z^{k }-z^{k+1}\right\rangle  \right.\\& ~~~~~  +  \left.\frac{\rho}{2}\left(\left\|z^{k }-x_i^{k+1}\right\|^2-  \left\|z^{k+1}-x_i^{k+1}\right\|^2   \right)\right] \\
 	=&\frac{\rho N}{2} \left\| z^{k+1}-z^{k }\right\|^2 + \sum_{i=1}^{N}\rho\left\langle z^{k+1}-x_i^{k+1}+\frac{y_i^{k+1}}{\rho},z^{k }-z^{k+1} \right \rangle \\
 	\overset{(a)}{=}&\frac{\rho N}{2} \left\| z^{k+1}-z^{k }\right\|^2,\label{eq172}
 	\end{aligned}
 	\end{equation}
 	where $(a)$ holds because the truth $z^{k+1}=\frac{1}{N}\sum_{i=1}^{N}(x_i^{k+1}-\frac{y_i^{k+1}}{\rho})$. By combining (\ref{eq14}) and (\ref{eq172}), we obtain
 	\begin{equation}\label{eq23}
 	\begin{aligned}
 	&\mathcal{L}_{\rho}\left(\bm{x}^k,\bm{y}^k,z^{k } \right) - \mathcal{L}_{\rho}\left( \bm{x}^{k+1},\bm{y}^{k+1},z^{k+1} \right)\\
 	=&\left(\frac{\rho-L}{2}-\frac{1}{2 \gamma_{i_k}^k} \right) \left\| x_{i_k}^k-x_{i_k}^{k+1}  \right\|^2+ \frac{\rho N}{2} \left\| z^{k+1}-z^{k }\right\|^2 \\&-\frac{1}{\gamma_{i_k}^k}\left(\frac{1}{\rho } + \frac{1}{2}\right)\left\| y_{i_k}^{k+1}- y_{i_k}^{k} \right\|^2  \\
 	\overset{(a)}{\geq} &\rho N\left( \frac{1}{2}-\frac{\rho N+2N}{\gamma_{i_k}^k} \right)\left\|z^{k+1}- z^{k} \right\|^2 \\& +\left(\frac{\rho-L}{2}-\frac{1}{2 \gamma_{i_k}^k} -\frac{\rho^2 + 2\rho }{\gamma_{i_k}^k} \right)\left\| x_{i_k}^{k+1}-x_{i_k}^k \right\|^2  ,
 	\end{aligned}
 	\end{equation}
 	where $(a)$ is from $y_{i_k}^{k+1}-y_{i_k}^k=\rho(x_{i_k}^{k+1}-x_{i_k}^k)-\rho N (z^{k+1}-z^k)$ and triangle inequality. Hence to ensure that $\mathcal{L}_{\rho}(\bm{x}^k,\bm{y}^k,z^{k} )$ is non-increasing with iterations, we must have $\rho>L$ and $\gamma_{i_k}^k> \max\{\frac{2\rho^2 + 4\rho +1}{\rho-L} ,2(\rho+2)N  \}$. This proves property 1). To prove statement 2), we verify that $\mathcal{L}_{\rho}(\bm{x}^{k+1},\bm{y}^{k+1},z^{k+1})$ is lower bounded.
 \begin{align}\label{eq19}
&\mathcal{L}_{\rho}\left(\bm{x}^{k+1},\bm{y}^{k+1},z^{k+1}\right) \notag\\
=& \sum_{j=0}^{N-1} \left[\vphantom{\left\|x_{i_{k-j}}^{k-j+1}  \right\|^2}  f_{i_{k-j}}\left(x_{i_{k-j}}^{k-j+1}\right)+\left\langle y_{i_{k-j}}^{k-j+1 }, z^{k+1}-x_{i_{k-j}}^{k-j+1} \right\rangle\right.    \notag\\&\left.~~~~~ +\frac{\rho}{2}\left\|z^{k+1}-x_{i_{k-j}}^{k-j+1}  \right\|^2 \right]   \notag\\ 
\overset{(a)}{\geq}&\sum_{j=0}^{N-1}\left[f_{i_{k-j}}\left(z^{k+1}\right)+\frac{\rho-L}{2}\left\|z^{k+1}-x_{i_{k-j}}^{k-j+1} \right\|^2  \right. \notag\\&~~~~~~ \left. -\left\langle \nabla f_{i_{k-j}}\left(x_{i_{k-j}}^{k-j+1}\right)-y_{i_{k-j}}^{k-j+1 },   z^{k+1}-x_{i_{k-j}}^{k-j+1} \right\rangle \right]   \notag\\
\geq &  \min_{x}\left\{\sum_{i=1}^{N} f_{i}(x)\right\} +  \frac{\rho-L}{2}\sum_{j=0}^{N-1}  \vphantom{\frac{1}{\gamma_{i_{k-j}}^{k-j}}}\left  \|z^{k+1 }-x_{i_{k-j}}^{k-j+1} \right\|^2   \notag\\\overset{(b)}{\geq}& F(x^*)>-\infty,   
\end{align}
 	where $(a)$ is from L-Lipschitz differentiable $f_i$ and $(b)$ is from the Assumption 2 in which the optimal value $F(x^*)$ is assumed to be lower bounded. Guaranteeing $\rho>L$ can make (\ref{eq19}) satisfied, which completes the proof for property 2).
Then with the monotonicity statement 1), we conclude that $\mathcal{L}_{\rho}$ is convergent.

 \section{Proof of Theorem \ref{theorem111}}
 Note that the KKT point $(\bm{x}^*,\bm{y}^*,z^*)$ of problem (\ref{eq2}) satisfies the following conditions
 \begin{subequations}\label{eq266}
 	\begin{align}		
 	&\nabla f_i\left(x_i^*\right) - y_i^*= \bm{0},~ \forall i\in\mathcal{V};\\
 	&\sum_{i=1}^{N}y_i^* = \bm{0}; \\
 	&z^* = x_i^*,~\forall i\in\mathcal{V}. 
 	\end{align}	
 \end{subequations}
 Since (\ref{eq266}) also implies that $\sum_{i=1}^{N}\nabla f_i(x_i^*)=\bm{0}$, $\bm{x}^*$ is also a stationary point of problem (\ref{eq2}). By summing (\ref{eq23}) from all iterations $0$ to $k$, we obtain
\begin{equation}
 \begin{aligned}\label{eq25}
 &\sum_{l=0}^{k}\left[\rho N\left( \frac{1}{2}-\frac{\rho N +2 N }{\gamma_{i_l}^l} \right)\left\|z^{l+1}- z^{l} \right\|^2 \right.  \\& ~~~~~ \left.+\left(\frac{\rho-L}{2}-\frac{1}{2 \gamma_{i_l}^l} -\frac{\rho^2 + 2\rho }{\gamma_{i_l}^l} \right)\left\| x_{i_l}^{l+1}-x_{i_l}^l \right\|^2 \right]  \\
 \leq &\mathcal{L}_{\rho}\left(\bm{x}^0,\bm{y}^0,z^0\right)-\mathcal{L}_{\rho}\left(\bm{x}^{k+1},\bm{y}^{k+1},z^{k+1}\right) \\
 =&\left[ \mathcal{L}_{\rho}\left(\bm{x}^0,\bm{y}^0,z^0\right)-F(x^*) \right]-\left[\mathcal{L}_{\rho}\left(\bm{x}^{k+1},\bm{y}^{k+1},z^{k+1}\right)-F(x^*) \right] \\
 \overset{(a)}{\leq}& \mathcal{L}_{\rho}\left(\bm{x}^0,\bm{y}^0,z^0\right)-F(x^*) <+\infty,
 \end{aligned}
 \end{equation}
 where (a) is from (\ref{eq19}). Then with the conditions $\rho>L$ and $\gamma_{i_k}^k> \max\{\frac{2\rho^2 + 4\rho +1}{\rho-L} ,2(\rho+2)N \}$, the left hand side (LHS) of inequality (\ref{eq25}) is positive and increasing with $k$. Since the RHS of (\ref{eq25}) is finite, the iterates $(\bm{x}^k,\bm{y}^k,z^k)$ must satisfy,  
 
 \begin{equation}\label{eq26}
 \lim\limits_{k\to \infty}	\left\| z^{k+1}-z^{k}\right\|=0;~\lim\limits_{k\to \infty}	\left\| x_i^{k+1}-x_i^{k}\right\|=0,~\forall i\in\mathcal{V}.
 \end{equation}
 In addition, from the inequality of (\ref{eq21}), the dual residue is given by $\lim\nolimits_{k\to\infty}  \|y_{i_k}^{k+1}-y_{i_k}^{k }  \| \leq \lim\limits_{k\to\infty} \rho  \|x_{i_k}^{k+1}-x_{i_k}^k  \| + \rho N \| z^{k+1} -z^k \|=0 $. Since $ \|y_{i_k}^{k+1}-y_{i_k}^{k }   \|\geq 0$, we can conclude that  
 \begin{equation}\label{eq27}
 \lim\limits_{k\to\infty} \left\|y_{i }^{k+1}-y_{i}^{k }  \right\|=0  ,~\forall i\in\mathcal{V}.
 \end{equation}
 Then we utilize (\ref{eq26}) and (\ref{eq27}) to show that the limit point of $(\bm{x}^k,\bm{y}^k,z^k)$ is a KKT point of problem (\ref{eq2}). For any $i\in\mathcal{V}$, there always exists a $j\in[0,N-1]$ satisfying 
 \begin{equation}
 \begin{aligned}
 \left\|z^{k+1}-x_i^{k+1}   \right\|=& \left\|z^{k}-x_{i_{k-j}}^{k-j+1}   \right\|\\
 \leq & \left\|z^{k+1}-z^{k-j}   \right\|  + \left\| z^{k-j}-x_{i_{k-j}}^{k-j+1}   \right\|\\
 \overset{(a)}{=}&\left\|z^{k+1}-z^{k-j}   \right\| + \frac{1}{\rho \gamma_{i_{k-j}}^{k-j}}\left\|  y_{i_{k-j}}^{k-j+1 } -y_{i_{k-j}}^{k-j }\right\|,
 \end{aligned}
 \end{equation} 
 where (a) is from (\ref{eq20}). Then we can conclude 
 \begin{equation}\label{eq31}
 \lim\limits_{k\to\infty}\left\|z^{k+1}-x_i^{k+1}  \right\|=0,\forall i\in\mathcal{V}.
 \end{equation}  
 By applying (\ref{eq4a}) and (\ref{eq131}), we obtain that
 \begin{equation}\label{eq32}
 \lim\limits_{k\to\infty}\sum_{i=1}^{N}y_i^k=\bm{0};~   \lim\limits_{k\to\infty}\nabla f_i(x_i^k) - y_i^k = \bm{0},~\forall i\in\mathcal{V}.
 \end{equation} 
 Equations (\ref{eq31}) and (\ref{eq32}) imply that $(\bm{x}^k,\bm{y}^k,z^k)$ asymptotically satisfy the KKT conditions in (\ref{eq266}). Then we conclude that the iterates $(\bm{x}^k,\bm{y}^k,z^k)$ in PI-ADMM1 have limit points, which satisfy KKT conditions of original problem (\ref{eq2}).
 
  	\section{Proof of Remark 3}
  According to (\ref{eq4b}), the states of agent $i_K$ satisfy
  \begin{align}
  x_{i_K}^{K-nN+1}=x_{i_K}^{K-nN+2}=...=x_{i_K}^{K-(n-1)N}, 1\leq n\leq  \left\lfloor \frac{K}{N}\right\rfloor.
  \end{align}	 
  Then from recursive equation (\ref{eq12}), for $k \in[K-nN+1,K-(n-1)N] $, we have
  \begin{align} 
  x_{i_K}^{k} =&x_{i_K}^{K-(n-1)N} \notag\\ =&2 x_{i_K}^{K-(n-1)N+1} - N\Delta^{K-(n-1)N+1}  - z^{K-(n-1)N+1}\notag \\
  =&4 x_{i_K}^{K-(n-2)N+1} - 2N\Delta^{K-(n-1)N+1}  - 2z^{K-(n-1)N+1}\notag\\&
  - N\Delta^{K-(n-2)N+1}  - z^{K-(n-2)N+1} \notag\\
  &\qquad\qquad\qquad\vdots\notag \\
  =&2^{n}x_{i_K}^{K +1} - \sum_{j=1}^{n}\left( 2^{n-j } N \Delta^{K-(n-j)N+1}  -  2^{ n-j} z^{K-(n-j)N+1} \right).\label{eq171} 
  \end{align}		
  By substituting $x_{i_K}^{K +1}$ with $z^{K+1}$ in (\ref{eq171}), the measurement $\hat{x}_{i_K}^{k}$ is obtained. With the condition $ \|z^{K+1} -x_{i_K}^{K+1} \|< \epsilon $, the LHS of (\ref{eq191}) is obtained. Since the eavesdropper cannot deduce $y_{i_K}^K$ directly, instead $y_{i_K}^k$ can be derived recursively from $y_{i_K}^0$, where 
  \begin{equation}
  \begin{aligned}
  y_{i_K}^{k} =&y_{i_K}^{K-nN+1}  \\
  =&y_{i_K}^{K-nN}+\frac{\rho}{2} \left(  z^{K-nN } - N \Delta^{K-nN+1} -x_{i_K}^{K-nN }\right)\\
  &\qquad\qquad\qquad\vdots \\
  =&y_{i_K}^0 + \sum_{j=n}^{\lfloor  \frac{K}{N} \rfloor}\frac{\rho}{2} \left(  z^{K-jN } - N \Delta^{K-jN+1} -x_{i_K}^{K-jN }\right).
  \end{aligned}			
  \end{equation}
  Due to $y_{i_K}^0 = \rho x_{i_K}^0$ and the bounds for measurement $\hat{x}_{i_K}^k$, the final result is concluded.
 
 \end{appendices}

\bibliography{ref}

\begin{thebibliography}{10}
\providecommand{\url}[1]{#1}
\csname url@samestyle\endcsname
\providecommand{\newblock}{\relax}
\providecommand{\bibinfo}[2]{#2}
\providecommand{\BIBentrySTDinterwordspacing}{\spaceskip=0pt\relax}
\providecommand{\BIBentryALTinterwordstretchfactor}{4}
\providecommand{\BIBentryALTinterwordspacing}{\spaceskip=\fontdimen2\font plus
\BIBentryALTinterwordstretchfactor\fontdimen3\font minus
  \fontdimen4\font\relax}
\providecommand{\BIBforeignlanguage}[2]{{%
\expandafter\ifx\csname l@#1\endcsname\relax
\typeout{** WARNING: IEEEtran.bst: No hyphenation pattern has been}%
\typeout{** loaded for the language `#1'. Using the pattern for}%
\typeout{** the default language instead.}%
\else
\language=\csname l@#1\endcsname
\fi
#2}}
\providecommand{\BIBdecl}{\relax}
\BIBdecl

\bibitem{5444944}
Q.~{Ling} and Z.~{Tian}, ``Decentralized sparse signal recovery for compressive
  sleeping wireless sensor networks,'' \emph{IEEE Transactions on Signal
  Processing}, vol.~58, no.~7, pp. 3816--3827, July 2010.

\bibitem{6945888}
T.~{Chang}, M.~{Hong}, and X.~{Wang}, ``Multi-agent distributed optimization
  via inexact consensus admm,'' \emph{IEEE Transactions on Signal Processing},
  vol.~63, no.~2, pp. 482--497, Jan 2015.

\bibitem{7041200}
C.~{Ravazzi}, S.~M. {Fosson}, and E.~{Magli}, ``Distributed iterative
  thresholding for $\ell_0$/$\ell_1$-regularized linear inverse problems,''
  \emph{IEEE Transactions on Information Theory}, vol.~61, no.~4, pp.
  2081--2100, April 2015.

\bibitem{predd2009}
J.~B. Predd, S.~R. Kulkarni, and H.~V. Poor, ``A collaborative training
  algorithm for distributed learning,'' \emph{IEEE Transactions on Information
  Theory}, vol.~55, no.~4, pp. 1856--1871, 2009.

\bibitem{forero2010}
P.~A. Forero, A.~Cano, and G.~B. Giannakis, ``Consensus-based distributed
  support vector machines,'' \emph{Journal of Machine Learning Research},
  vol.~11, no. May, pp. 1663--1707, 2010.

\bibitem{5499155}
G.~{Mateos}, J.~A. {Bazerque}, and G.~B. {Giannakis}, ``Distributed sparse
  linear regression,'' \emph{IEEE Transactions on Signal Processing}, vol.~58,
  no.~10, pp. 5262--5276, Oct 2010.

\bibitem{6197748}
J.~{Chen} and A.~H. {Sayed}, ``Diffusion adaptation strategies for distributed
  optimization and learning over networks,'' \emph{IEEE Transactions on Signal
  Processing}, vol.~60, no.~8, pp. 4289--4305, Aug 2012.

\bibitem{ID}
I.~D. {Schizas}, A.~{Ribeiro}, and G.~B. {Giannakis}, ``Consensus in ad-hocwsns
  with noisy links-part i: Distributed estimation of deterministic signals,''
  \emph{IEEE Transactions on Signal Processing}, vol.~56, no.~1, pp. 350--364,
  Jan 2008.

\bibitem{GB}
G.~B. {Giannakis}, V.~{Kekatos}, N.~{Gatsis}, S.~{Kim}, H.~{Zhu}, and B.~F.
  {Wollenberg}, ``Monitoring and optimization for power grids: A signal
  processing perspective,'' \emph{IEEE Signal Processing Magazine}, vol.~30,
  no.~5, pp. 107--128, Sep. 2013.

\bibitem{HJ}
H.~J. {Liu}, W.~{Shi}, and H.~{Zhu}, ``Distributed voltage control in
  distribution networks: Online and robust implementations,'' \emph{IEEE
  Transactions on Smart Grid}, vol.~9, no.~6, pp. 6106--6117, Nov 2018.

\bibitem{WPG}
X.~{Mao}, Y.~{Gu}, and W.~{Yin}, ``Walk proximal gradient: An energy-efficient
  algorithm for consensus optimization,'' \emph{IEEE Internet of Things
  Journal}, vol.~6, no.~2, pp. 2048--2060, April 2019.

\bibitem{DGD}
K.~Yuan, Q.~Ling, and W.~Yin, ``On the convergence of decentralized gradient
  descent,'' \emph{SIAM Journal on Optimization}, vol.~26, no.~3, pp.
  1835--1854, 2016.

\bibitem{EXTRA}
W.~Shi, Q.~Ling, G.~Wu, and W.~Yin, ``Extra: An exact first-order algorithm for
  decentralized consensus optimization,'' \emph{SIAM Journal on Optimization},
  vol.~25, no.~2, pp. 944--966, 2015.

\bibitem{JFC}
J.~F.~C. {Mota}, J.~M.~F. {Xavier}, P.~M.~Q. {Aguiar}, and M.~{Puschel},
  ``D-admm: A communication-efficient distributed algorithm for separable
  optimization,'' \emph{IEEE Transactions on Signal Processing}, vol.~61,
  no.~10, pp. 2718--2723, May 2013.

\bibitem{7423789}
T.~{Chang}, M.~{Hong}, W.~{Liao}, and X.~{Wang}, ``Asynchronous distributed
  admm for large-scale optimization-part i: Algorithm and convergence
  analysis,'' \emph{IEEE Transactions on Signal Processing}, vol.~64, no.~12,
  pp. 3118--3130, June 2016.

\bibitem{zhang2014}
R.~Zhang and J.~Kwok, ``Asynchronous distributed admm for consensus
  optimization,'' in \emph{International Conference on Machine Learning}, 2014,
  pp. 1701--1709.

\bibitem{WADMM}
W.~Yin, X.~Mao, K.~Yuan, Y.~Gu, and A.~H. Sayed, ``A communication-efficient
  random-walk algorithm for decentralized optimization,'' \emph{ArXiv}, vol.
  abs/1804.06568, 2018.

\bibitem{PWADMM}
Y.~{Ye}, H.~{Chen}, Z.~{Ma}, and M.~{Xiao}, ``Decentralized consensus
  optimization based on parallel random walk,'' \emph{IEEE Communications
  Letters}, pp. 1--1, 2019.

\bibitem{7552562}
Q.~{Ling}, Y.~{Liu}, W.~{Shi}, and Z.~{Tian}, ``Weighted admm for fast
  decentralized network optimization,'' \emph{IEEE Transactions on Signal
  Processing}, vol.~64, no.~22, pp. 5930--5942, Nov 2016.

\bibitem{COCA}
Y.~Liu, W.~Xu, G.~Wu, Z.~Tian, and Q.~Ling, ``Communication-censored admm for
  decentralized consensus optimization,'' \emph{IEEE Transactions on Signal
  Processing}, vol.~67, no.~10, pp. 2565--2579, 2019.

\bibitem{4276978}
C.~G. {Lopes} and A.~H. {Sayed}, ``Incremental adaptive strategies over
  distributed networks,'' \emph{IEEE Transactions on Signal Processing},
  vol.~55, no.~8, pp. 4064--4077, Aug 2007.

\bibitem{5495951}
------, ``Randomized incremental protocols over adaptive networks,'' in
  \emph{2010 IEEE International Conference on Acoustics, Speech and Signal
  Processing}, March 2010, pp. 3514--3517.

\bibitem{7472455}
S.~{Zhu}, M.~{Hong}, and B.~{Chen}, ``Quantized consensus admm for multi-agent
  distributed optimization,'' in \emph{2016 IEEE International Conference on
  Acoustics, Speech and Signal Processing (ICASSP)}, March 2016, pp.
  4134--4138.

\bibitem{pmlrv97}
S.~P. Karimireddy, Q.~Rebjock, S.~Stich, and M.~Jaggi, ``Error feedback fixes
  {S}ign{SGD} and other gradient compression schemes,'' ser. Proceedings of
  Machine Learning Research, vol.~97.\hskip 1em plus 0.5em minus 0.4em\relax
  Long Beach, California, USA: PMLR, 09--15 Jun 2019, pp. 3252--3261.

\bibitem{NIPS20181}
P.~Jiang and G.~Agrawal, ``A linear speedup analysis of distributed deep
  learning with sparse and quantized communication,'' in \emph{Advances in
  Neural Information Processing Systems 31}.\hskip 1em plus 0.5em minus
  0.4em\relax Curran Associates, Inc., 2018, pp. 2525--2536.

\bibitem{fikri2017sparse}
A.~Fikri~Aji and K.~Heafield, ``Sparse communication for distributed gradient
  descent,'' \emph{arXiv preprint arXiv:1704.05021}, 2017.

\bibitem{8558107}
J.~{Zhang}, K.~{You}, and T.~{Basar}, ``Distributed discrete-time optimization
  in multiagent networks using only sign of relative state,'' \emph{IEEE
  Transactions on Automatic Control}, vol.~64, no.~6, pp. 2352--2367, June
  2019.

\bibitem{EUdataregulations2018}
\BIBentryALTinterwordspacing
2018 reform of eu data protection rules. European Commission. [Online].
  Available:
  \url{https://ec.europa.eu/commission/sites/beta-political/files/data-protection-factsheet-changes_en.pdf}
\BIBentrySTDinterwordspacing

\bibitem{xiao2019local}
H.~Xiao, Y.~Ye, and S.~Devadas, ``Local differential privacy in decentralized
  optimization,'' \emph{arXiv preprint arXiv:1902.06101}, 2019.

\bibitem{huang2015differentially}
Z.~Huang, S.~Mitra, and N.~Vaidya, ``Differentially private distributed
  optimization,'' in \emph{Proceedings of the 2015 International Conference on
  Distributed Computing and Networking}.\hskip 1em plus 0.5em minus 0.4em\relax
  ACM, 2015, p.~4.

\bibitem{han2016differentially}
S.~Han, U.~Topcu, and G.~J. Pappas, ``Differentially private distributed
  constrained optimization,'' \emph{IEEE Transactions on Automatic Control},
  vol.~62, no.~1, pp. 50--64, 2016.

\bibitem{nozari2016differentially}
E.~Nozari, P.~Tallapragada, and J.~Cort{\'e}s, ``Differentially private
  distributed convex optimization via functional perturbation,'' \emph{IEEE
  Transactions on Control of Network Systems}, vol.~5, no.~1, pp. 395--408,
  2016.

\bibitem{zhang}
C.~{Zhang}, M.~{Ahmad}, and Y.~{Wang}, ``{ADMM} based privacy-preserving
  decentralized optimization,'' \emph{IEEE Transactions on Information
  Forensics and Security}, vol.~14, no.~3, pp. 565--580, March 2019.

\bibitem{zhang18f}
X.~Zhang, M.~M. Khalili, and M.~Liu, ``Improving the privacy and accuracy of
  {ADMM}-based distributed algorithms,'' in \emph{Proceedings of the 35th
  International Conference on Machine Learning}, vol.~80.\hskip 1em plus 0.5em
  minus 0.4em\relax Stockholm Sweden: PMLR, 10--15 Jul 2018, pp. 5796--5805.

\bibitem{7563366}
T.~{Zhang} and Q.~{Zhu}, ``Dynamic differential privacy for {ADMM}-based
  distributed classification learning,'' \emph{IEEE Transactions on Information
  Forensics and Security}, vol.~12, no.~1, pp. 172--187, Jan 2017.

\bibitem{8772211}
Z.~{Huang}, R.~{Hu}, Y.~{Guo}, E.~{Chan-Tin}, and Y.~{Gong}, ``Dp-admm:
  Admm-based distributed learning with differential privacy,'' \emph{IEEE
  Transactions on Information Forensics and Security}, vol.~15, pp. 1002--1012,
  2020.

\bibitem{alexandru2018cloud}
A.~B. Alexandru, K.~Gatsis, Y.~Shoukry, S.~A. Seshia, P.~Tabuada, and G.~J.
  Pappas, ``Cloud-based quadratic optimization with partially homomorphic
  encryption,'' \emph{arXiv preprint arXiv:1809.02267}, 2018.

\bibitem{5156499}
S.~{Han}, W.~K. {Ng}, L.~{Wan}, and V.~C.~S. {Lee}, ``Privacy-preserving
  gradient-descent methods,'' \emph{IEEE Transactions on Knowledge and Data
  Engineering}, vol.~22, no.~6, pp. 884--899, June 2010.

\bibitem{mangasarian2012privacy}
O.~L. Mangasarian, ``Privacy-preserving horizontally partitioned linear
  programs,'' \emph{Optimization Letters}, vol.~6, no.~3, pp. 431--436, 2012.

\bibitem{gade2017private}
S.~Gade and N.~H. Vaidya, ``Private learning on networks: Part ii,''
  \emph{arXiv preprint arXiv:1703.09185}, 2017.

\bibitem{Ortega}
\BIBentryALTinterwordspacing
J.~M. Ortega and W.~C. Rheinboldt, \emph{Iterative Solution of Nonlinear
  Equations in Several Variables}.\hskip 1em plus 0.5em minus 0.4em\relax
  Society for Industrial and Applied Mathematics, 2000. [Online]. Available:
  \url{https://epubs.siam.org/doi/abs/10.1137/1.9780898719468}
\BIBentrySTDinterwordspacing

\bibitem{ye2019decentralized}
Y.~Ye, M.~Xiao, and M.~Skoglund, ``Decentralized multi-task learning based on
  extreme learning machines,'' \emph{arXiv preprint arXiv:1904.11366}, 2019.

\bibitem{yan2013}
F.~{Yan}, S.~{Sundaram}, S.~V.~N. {Vishwanathan}, and Y.~{Qi}, ``Distributed
  autonomous online learning: Regrets and intrinsic privacy-preserving
  properties,'' \emph{IEEE Transactions on Knowledge and Data Engineering},
  vol.~25, no.~11, pp. 2483--2493, Nov 2013.

\bibitem{lou2018}
Y.~{Lou}, L.~{Yu}, S.~{Wang}, and P.~{Yi}, ``Privacy preservation in
  distributed subgradient optimization algorithms,'' \emph{IEEE Transactions on
  Cybernetics}, vol.~48, no.~7, pp. 2154--2165, July 2018.

\bibitem{8523816}
C.~{Huang}, L.~{Liu}, C.~{Yuen}, and S.~{Sun}, ``Iterative channel estimation
  using lse and sparse message passing for {M}m{W}ave {MIMO} systems,''
  \emph{IEEE Transactions on Signal Processing}, vol.~67, no.~1, pp. 245--259,
  Jan 2019.

\end{thebibliography}
\bibliographystyle{IEEEtran}

\end{document}